%
\documentclass[12pt,letterpaper,final,twoside,leqno]{amsart}
\usepackage{amsmath,amsthm,amssymb,amsfonts,amscd,amsopn} %
\usepackage{eucal,mathrsfs,relsize}
\usepackage{xspace,chngcntr}
\usepackage{rotating,mathtools}
\usepackage[all,cmtip]{xy}\xyoption{dvips}
\usepackage{tikz-cd}
\usetikzlibrary{arrows.meta}
 
\usepackage{comment} 
\usepackage{supertabular}
\usepackage
{hyperref}
\hypersetup{
    colorlinks=true,
    linkcolor={blue!50!black},
    citecolor={green!50!black},
    urlcolor={red!80!black}
}

\usepackage{datetime,chngcntr}
\usepackage{paralist}

  \pltopsep=1pt
  \plitemsep=1pt
  \plparsep=1pt
\usepackage{enumitem}
\binoppenalty=10000
\relpenalty=10000
\usepackage{eurosym}

\usepackage{mfirstuc}
\DeclareMathAlphabet{\smallchanc}{OT1}{pzc}%
                                 {m}{it}
\DeclareFontFamily{OT1}{pzc}{}
\DeclareFontShape{OT1}{pzc}{m}{it}%
             {<-> s * [1.100] pzcmi7t}{}
\DeclareMathAlphabet{\mathchanc}{OT1}{pzc}%
                                 {m}{it}

\NeedsTeXFormat{LaTeX2e}
\ProvidesPackage{skautoref-sf}
   [2017/06/13 v0.1 finetuning mabliautoref and theorem setup]

\RequirePackage{hyperref}

\newcommand{\sectionsize}{\relsize{-.5}}
\newcommand{\theoremsize}{\relsize{-.5}}
\newcommand{\mrsize}{\relsize{-.8}}

\renewcommand{\subsectionautorefname}{\sectionsize\sf \subsectionautorefname}

\makeatletter
\@ifdefinable\equationname{\let\equationname\equationautorefname}
\def\equationautorefname~#1\@empty\@empty\null{\protect{\theoremsize\sf
    (#1\@empty\@empty\null)}}%
\@ifdefinable\AMSname{\let\AMSname\AMSautorefname}
\def\AMSautorefname~#1\@empty\@empty\null{\sf( #1\@empty\@empty\null)}%
\@ifdefinable\itemname{\let\itemname\itemautorefname}
\def\itemautorefname~#1\@empty\@empty\null{\mrsize{%
    {\sf #1}}\@empty\@empty\null%
}%
\makeatother



%
\RequirePackage{amsthm}
\RequirePackage{aliascnt}
\newcommand{\basetheorem}[3]{%
    \newtheorem{#1}{#2}[#3]
    \newtheorem*{#1*}{#2}
    \expandafter\def\csname #1autorefname\endcsname{#2}
}%
\newcommand{\maketheorem}[3]{%
    \newaliascnt{#1}{#2}
    \newtheorem{#1}[#1]{\theoremsize\sf #3}
    \aliascntresetthe{#1}
    \expandafter\def\csname #1autorefname\endcsname{\theoremsize\sf #3}
    \newtheorem{#1*}{#3}
}%
\newcommand{\baseremark}[3]{%
    \newtheorem{#1}{#2}{#3}
    \newtheorem*{#1*}{#2}
    \expandafter\def\csname #1autorefname\endcsname{#2}
}%
\newcommand{\makeremark}[3]{%
    \newaliascnt{#1}{#2}
    \newtheorem{#1}[equation]{#3}
    \aliascntresetthe{#1}
    \expandafter\def\csname #1autorefname\endcsname{\theoremsize\sf #3}
    \newtheorem{#1*}{#3}
}%
\theoremstyle{plain}   

\basetheorem{theorem}{Theorem}{section}






                                
\newcommand{\mypagesize}{
\textwidth= 6.5in
\textheight=8.75in
\voffset-.5in
\hoffset-.75in
\marginparwidth=56pt
\footskip.5in
}

\newcounter{are-there-sections}
\setcounter{are-there-sections}{1}


\mypagesize
\usepackage{amsbsy}

\usepackage{marvosym}

\DeclareMathAlphabet{\smallchanc}{OT1}{pzc}%
                                 {m}{it}
\DeclareFontFamily{OT1}{pzc}{}
\DeclareFontShape{OT1}{pzc}{m}{it}%
             {<-> s * [1.100] pzcmi7t}{}
\DeclareMathAlphabet{\mathchanc}{OT1}{pzc}%
                                 {m}{it}

\newcommand{\mcL}{\mathchanc{L}}

\newcommand{\mcR}{\mathchanc{R}}





\newcommand{\sA}{\mathscr{A}}

\newcommand{\sO}{\mathscr{O}}




\newcommand{\sfw}{{\sf w}}




\newcommand{\bN}{\mathbb{N}}

\newcommand{\bZ}{\mathbb{Z}}











\DeclareSymbolFont{largesymbolsA}{U}{jkpexa}{m}{n}
\SetSymbolFont{largesymbolsA}{bold}{U}{jkpexa}{bx}{n}
\DeclareMathSymbol{\varprod}{\mathop}{largesymbolsA}{16}
\makeatletter
\newcommand{\LeftEqNo}{\let\veqno\@@leqno}
\makeatother
 %
 %


\newcommand{\ul}{\underline}

\newcommand{\into}{\hookrightarrow}

\newcommand{\properideal}%
        {\subsetneq}


\newcommand{\leteq}{\colon\!\!\!=}









\newcommand{\coh}{\mathrm{coh}}

\DeclareMathOperator{\cone}{{Cone}}
\DeclareMathOperator{\depth}{{depth}}

\newcommand{\filt}{{\operatorname{filt}}}

\DeclareMathOperator{\Hom}{Hom}

\DeclareMathOperator{\id}{{id}}

\newcommand{\hotimes}[0]{%
  \protect{\ensuremath{\kern.1em{{%
  \raisebox{.5\depth}{$\scriptstyle[$}
  }}\kern-.25em\otimes\kern-.25em{%
  \raisebox{.5\depth}{$\scriptstyle]$}
  }\kern.1em}}}

\DeclareMathOperator{\obj}{{Obj}}

%


%
%

\newcommand{\factor}[2]{\left. \raise .2em\hbox{\ensuremath{#1}\vphantom{$I^d$}}
\hskip -.1em \right/ \hskip -.4em \raise -.3em\hbox{\ensuremath{#2}}}%
\newcommand\mtimes[3]{{\varprod_{#1}^{#2}}_{\raise 1ex \hbox{\scriptsize #3}}}%



\newcommand{\myR}{{\mcR\!}}
\newcommand{\myL}{{\mcL\!}}



%


\newcommand{\kdot}{{{\,\begin{picture}(1,1)(-1,-2)\circle*{2}\end{picture}\,}}}

\newcommand{\cmx}[1]{{#1}^{\raisebox{.15em}{\ensuremath\kdot}}}


\newcommand{\Om}{\underline{\Omega}}

\newcommand{\hypres}[1]{{\varepsilon_\kdot:{#1}_\kdot \to {#1}}}

\def\dimcoh#1.#2.#3.{h^{#1}(#2,#3)}
\def\hypcoh#1.#2.#3.{\mathbb H_{\vphantom{l}}^{#1}(#2,#3)}
\def\loccoh#1.#2.#3.#4.{H^{#1}_{#2}(#3,#4)}
\def\dimloccoh#1.#2.#3.#4.{h^{#1}_{#2}(#3,#4)}
\def\lochypcoh#1.#2.#3.#4.{\mathbb H^{#1}_{#2}(#3,#4)}
\def\seslong#1.#2.#3.{0  \longrightarrow  #1   \longrightarrow 
 #2 \longrightarrow #3 \longrightarrow 0} 
\def\sesshort#1.#2.#3.{0
 \rightarrow #1 \rightarrow #2 \rightarrow #3 \rightarrow 0}
\def\dist#1.#2.#3.{  #1   \longrightarrow 
 #2 \longrightarrow #3 \stackrel{+1}{\longrightarrow} } 
\def\CDdist#1.#2.#3.{  #1   @>>>  #2  @>>>   #3 @>+1>> }  
\def\shortses#1.#2.#3.{0  \rightarrow  #1   \rightarrow 
 #2  \rightarrow   #3 \rightarrow  0}
\def\shortdist#1.#2.#3.{  #1   \rightarrow 
 #2  \rightarrow   #3 \stackrel{+1}{\rightarrow} }  
\def\ddist#1.#2.#3.#4.#5.#6.{\CD
#1 @>>> #2 @>>> #3 @>+1>> \\
@VVV @VVV @VVV \\
#4 @>>> #5 @>>> #6 @>+1>> 
\endCD}
\def\ddistun#1.#2.#3.#4.#5.#6.{\CD
#1 @>>> #2 @>>> #3 @>+1>> \\
@. @VVV @VVV  \\
#4 @>>> #5 @>>> #6 @>+1>> 
\endCD}
\def\Iff#1#2#3{
\hfil\hbox{\hsize =#1
\vtop{\noin #2}
\hskip.5cm 
\lower.5\baselineskip\hbox{$\Leftrightarrow$}\hskip.5cm
\vtop{\noin #3}}\hfil\medskip}
\newcommand{\union}\cup
\newcommand{\intersect}\cap
\newcommand{\Union}\bigcup
\newcommand{\Intersect}\bigcap
\def\myoplus#1.#2.{\underset #1 \to {\overset #2 \to \oplus}}

\newcommand{\resto}[1]{\raise -.5ex\hbox{$\vert$}_{#1}}

\def\qis{\,{\simeq}_{\text{qis}}\,}

\newcommand{\ses}{short exact sequence\xspace}
\newcommand{\sess}{short exact sequences\xspace}

\newcommand{\DB}{Du~Bois\xspace}

\begin{document}
\makeatletter
\definecolor{brick}{RGB}{204,0,0}
\def\@cite#1#2{{%
 \m@th\upshape\mdseries[{\sffamily\relsize{-.5}#1}{\if@tempswa,
   \sffamily\relsize{-.5}\color{brick} #2\fi}]}}
\newcommand{\sandor}{{\color{blue}{S\'andor \mdyydate\today}}}
\newenvironment{refmr}{}{}
%
\definecolor{refblue}{RGB}{0,0,128}
        \newcommand\refblue{\color{refblue}}
\newcommand\james{M\hskip-.1ex\raise .575ex \hbox{\text{c}}\hskip-.075ex Kernan\xspace}
\newcommand\ifft{if and only if\xspace}
\newcommand\sfref[1]{{\sf\protect{\relsize{-.5}\ref{#1}}}}
\newcommand\stepref[1]{{\sf\protect{\relsize{-.5}\refblue Step~\ref{#1}}}}
\newcommand\demoref[1]{{\sf(\protect{\relsize{-.5}\ref{#1}})}}
\renewcommand\eqref{\demoref}

%
\renewcommand\thesubsection{\thesection.\Alph{subsection}}
\renewcommand\subsection{
  \renewcommand{\sfdefault}{phv}
  \@startsection{subsection}%
  {2}{0pt}{-\baselineskip}{.2\baselineskip}{\raggedright
    \sffamily\itshape\relsize{-.5}
  }}
\renewcommand\section{
  \renewcommand{\sfdefault}{phv}
  \@startsection{section} %
  {1}{0pt}{\baselineskip}{.2\baselineskip}{\centering
    \sffamily
    \scshape
}}

\setlist[enumerate, 1]{itemsep=3pt,topsep=3pt,leftmargin=1.5em,font=\upshape,
  label={(\roman*)}}
\newlist{enumfull}{enumerate}{1}
\setlist[enumfull]{itemsep=3pt,topsep=3pt,leftmargin=3.25em,font=\upshape,
  label={(\thethm.\arabic*\/)}}
\newlist{enumbold}{enumerate}{1}
\setlist[enumbold]{itemsep=3pt,topsep=3pt,leftmargin=1.95em,font=\upshape,
  label={
    \textbf{({\it\textbf{\roman*}}\/)}}}
\newlist{enumalpha}{enumerate}{1}
\setlist[enumalpha]{itemsep=3pt,topsep=3pt,leftmargin=2em,font=\upshape,
  label=(\alph*\/)}
\newlist{widemize}{itemize}{1}
\setlist[widemize]{itemsep=3pt,topsep=3pt,leftmargin=1em,label=$\bullet$}
\newlist{widenumerate}{enumerate}{1}
\setlist[widenumerate]{itemsep=3pt,topsep=3pt,leftmargin=1.75em,label=(\roman*)}
\newlist{widenumalpha}{enumerate}{1}
\setlist[widenumalpha]{itemsep=3pt,topsep=3pt,leftmargin=1.5em,label=(\alph*)}
\newcounter{parentthmnumber}
\setcounter{parentthmnumber}{0}
\newcounter{currentparentthmnumber}
\setcounter{currentparentthmnumber}{0}
\newcounter{nexttag}
\newcommand{\setnexttag}{%
  \setcounter{nexttag}{\value{enumi}}%
  \addtocounter{nexttag}{1}%
}
\newcommand{\placenexttag}{%
\tag{\roman{nexttag}}%
}

\newenvironment{thmlista}{%
\label{parentthma}
\begin{enumerate}
}{%
\end{enumerate}
}
\newlist{thmlistaa}{enumerate}{1}
\setlist[thmlistaa]{label=(\arabic*), ref=\autoref{parentthm}\thethm(\arabic*)}
\newcommand*{\parentthmlabeldef}{%
  \expandafter\newcommand
  \csname parentthm\the\value{parentthmnumber}\endcsname
}
\newcommand*{\ptlget}[1]{%
  \romannumeral-`\x
  \ltx@ifundefined{parentthm\number#1}{%
    \ltx@space
    \parentthmundefined
  }{%
    \expandafter\ltx@space
    \csname mymacro\number#1\endcsname
  }%
}  
\newcommand*{\parentthmundefined}{\textbf{??}}
\parentthmlabeldef{parentthm}
\newenvironment{thmlistr}{%
\label{parentthm}
\begin{thmlistrr}}{%
\end{thmlistrr}}
\newlist{thmlistrr}{enumerate}{1}
\setlist[thmlistrr]{label=(\roman*), ref=\autoref{parentthm}(\roman*)}
\newcounter{proofstep}%
\setcounter{proofstep}{0}%
\newcommand{\pstep}[1]{%
  \smallskip
  \noindent
  \emph{{\sc Step \arabic{proofstep}:} #1.}\addtocounter{proofstep}{1}}
\newcounter{lastyear}\setcounter{lastyear}{\the\year}
\addtocounter{lastyear}{-1}
\newcommand\sideremark[1]{%
\normalmarginpar
\marginpar
[
\hskip .45in
\begin{minipage}{.75in}
\tiny #1
\end{minipage}
]
{
\hskip -.075in
\begin{minipage}{.75in}
\tiny #1
\end{minipage}
}}
\newcommand\rsideremark[1]{
\reversemarginpar
\marginpar
[
\hskip .45in
\begin{minipage}{.75in}
\tiny #1
\end{minipage}
]
{
\hskip -.075in
\begin{minipage}{.75in}
\tiny #1
\end{minipage}
}}
\newcommand\Index[1]{{#1}\index{#1}}
\newcommand\inddef[1]{\emph{#1}\index{#1}}
\newcommand\noin{\noindent}
\newcommand\hugeskip{\bigskip\bigskip\bigskip}
\newcommand\smc{\sc}
\newcommand\dsize{\displaystyle}
\newcommand\sh{\subheading}
\newcommand\nl{\newline}
\newcommand\toappear{\rm (to appear)}
\newcommand\mycite[1]{[#1]}
\newcommand\myref[1]{(\ref{#1})}
\newcommand{\parref}[1]{\eqref{\bf #1}}
\newcommand\myli{\hfill\newline\smallskip\noindent{$\bullet$}\quad}
\newcommand\vol[1]{{\bf #1}\ } 
\newcommand\yr[1]{\rm (#1)\ } 
\newcommand\cf{cf.\ \cite}
\newcommand\mycf{cf.\ \mycite}
\newcommand\te{there exist\xspace}
\newcommand\st{such that\xspace}
\newcommand\CM{Cohen-Macaulay\xspace}
\newcommand\GR{Grauert-Riemenschneider\xspace}
\newcommand\notinclass{{\relsize{-.5}\sf [not discussed in class]}\xspace}
\newcommand\myskip{3pt}
\newtheoremstyle{bozont}{3pt}{3pt}%
     {\itshape}
     {}
     {\bfseries}
     {.}
     {.5em}
     {\thmname{#1}\thmnumber{ #2}\thmnote{\normalsize
        \ \rm #3}}
\newtheoremstyle{bozont-sub}{3pt}{3pt}%
     {\itshape}
     {}
     {\bfseries}
     {.}
     {.5em}
     {\thmname{#1}\ \arabic{section}.\arabic{thm}.\thmnumber{#2}\thmnote{\normalsize
 \ \rm #3}}
\newtheoremstyle{bozont-named-thm}{3pt}{3pt}%
     {\itshape}
     {}
     {\bfseries}
     {.}
     {.5em}
     {\thmname{#1}\thmnumber{#2}\thmnote{ #3}}
\newtheoremstyle{bozont-named-bf}{3pt}{3pt}%
     {}
     {}
     {\bfseries}
     {.}
     {.5em}
     {\thmname{#1}\thmnumber{#2}\thmnote{ #3}}
\newtheoremstyle{bozont-named-sf}{3pt}{3pt}%
     {}
     {}
     {\sffamily}
     {.}
     {.5em}
     {\thmname{#1}\thmnumber{#2}\thmnote{ #3}}
\newtheoremstyle{bozont-named-sc}{3pt}{3pt}%
     {}
     {}
     {\scshape}
     {.}
     {.5em}
     {\thmname{#1}\thmnumber{#2}\thmnote{ #3}}
\newtheoremstyle{bozont-named-it}{3pt}{3pt}%
     {}
     {}
     {\itshape}
     {.}
     {.5em}
     {\thmname{#1}\thmnumber{#2}\thmnote{ #3}}
\newtheoremstyle{bozont-sf}{3pt}{3pt}%
     {}
     {}
     {\sffamily}
     {.}
     {.5em}
     {\thmname{#1}\thmnumber{ #2}\thmnote{\normalsize
 \ \rm #3}}
\newtheoremstyle{bozont-sc}{3pt}{3pt}%
     {}
     {}
     {\scshape}
     {.}
     {.5em}
     {\thmname{#1}\thmnumber{ #2}\thmnote{\normalsize
 \ \rm #3}}
\newtheoremstyle{bozont-remark}{3pt}{3pt}%
     {}
     {}
     {\scshape}
     {.}
     {.5em}
     {\thmname{#1}\thmnumber{ #2}\thmnote{\normalsize
 \ \rm #3}}
\newtheoremstyle{bozont-subremark}{3pt}{3pt}%
     {}
     {}
     {\scshape}
     {.}
     {.5em}
     {\thmname{#1}\ \arabic{section}.\arabic{thm}.\thmnumber{#2}\thmnote{\normalsize
 \ \rm #3}}
\newtheoremstyle{bozont-def}{3pt}{3pt}%
     {}
     {}
     {\bfseries}
     {.}
     {.5em}
     {\thmname{#1}\thmnumber{ #2}\thmnote{\normalsize
 \ \rm #3}}
\newtheoremstyle{bozont-reverse}{3pt}{3pt}%
     {\itshape}
     {}
     {\bfseries}
     {.}
     {.5em}
     {\thmnumber{#2.}\thmname{ #1}\thmnote{\normalsize
 \ \rm #3}}
\newtheoremstyle{bozont-reverse-sc}{3pt}{3pt}%
     {\itshape}
     {}
     {\scshape}
     {.}
     {.5em}
     {\thmnumber{#2.}\thmname{ #1}\thmnote{\normalsize
 \ \rm #3}}
\newtheoremstyle{bozont-reverse-sf}{3pt}{3pt}%
     {\itshape}
     {}
     {\sffamily}
     {.}
     {.5em}
     {\thmnumber{#2.}\thmname{ #1}\thmnote{\normalsize
 \ \rm #3}}
\newtheoremstyle{bozont-remark-reverse}{3pt}{3pt}%
     {}
     {}
     {\sc}
     {.}
     {.5em}
     {\thmnumber{#2.}\thmname{ #1}\thmnote{\normalsize
 \ \rm #3}}
\newtheoremstyle{bozont-def-reverse}{3pt}{3pt}%
     {}
     {}
     {\sffamily}
     {.}
     {.5em}
     {\thmnumber{{\relsize{-.5}(#2)}}\thmname{ {\relsize{-.25}#1}}\thmnote{{\relsize{-.5}\ \sf #3}}}
\newtheoremstyle{bozont-def-newnum-reverse}{3pt}{3pt}%
     {}
     {}
     {\bfseries}
     {}
     {.5em}
     {\thmnumber{#2.}\thmname{ #1}\thmnote{\normalsize
 \ \rm #3}}
\newtheoremstyle{bozont-def-newnum-reverse-plain}{3pt}{3pt}%
   {}
   {}
   {}
   {}
   {.5em}
   {\thmnumber{\!(#2)}\thmname{ #1}\thmnote{\normalsize
 \ \rm #3}}
\newtheoremstyle{bozont-number}{3pt}{3pt}%
   {}
   {}
   {}
   {}
   {0pt}
   {\thmnumber{\!(#2)} }
\newtheoremstyle{bozont-say}{3pt}{3pt}%
     {}
     {}
     {\sffamily}
     {.}
     {.5em}
     {\thmnumber{{\relsize{-.5}\S#2}}\thmname{ {\relsize{-.25}#1}}%
       \ \thmnote{
         \it #3}}
\newtheoremstyle{bozont-subsay}{3pt}{3pt}%
     {}
     {}
     {\sffamily}
     {.}
     {.5em}
     {\thmnumber{{\relsize{-.5}\S\S#2}}\thmname{ {\relsize{-.25}#1}}\thmnote{{\relsize{-.5}\ \sf #3}}}
\newtheoremstyle{bozont-step}{3pt}{3pt}%
   {\itshape}
   {}
   {\scshape}
   {}
   {.5em}
   {$\boxed{\text{\sc \thmname{#1}~\thmnumber{#2}:\!}}$}
\theoremstyle{bozont}    
\ifnum \value{are-there-sections}=0 {%
  \basetheorem{proclaim}{Theorem}{}
} 
\else {%
  \basetheorem{proclaim}{Theorem}{section}
} 
\fi
\maketheorem{thm}{proclaim}{Theorem}
\maketheorem{mainthm}{proclaim}{Main Theorem}
\maketheorem{cor}{proclaim}{Corollary} 
\maketheorem{cors}{proclaim}{Corollaries} 
\maketheorem{lem}{proclaim}{Lemma} 
\maketheorem{prop}{proclaim}{Proposition} 
\maketheorem{conj}{proclaim}{Conjecture}
\basetheorem{subproclaim}{Theorem}{proclaim}
\maketheorem{sublemma}{subproclaim}{Lemma}
\newenvironment{sublem}{%
\setcounter{sublemma}{\value{equation}}
\begin{sublemma}}
{\end{sublemma}}
\theoremstyle{bozont-sub}
\maketheorem{subthm}{equation}{Theorem}
\maketheorem{subcor}{equation}{Corollary} 
\maketheorem{subprop}{equation}{Proposition} 
\maketheorem{subconj}{equation}{Conjecture}
\theoremstyle{bozont-named-thm}
\maketheorem{namedthm}{proclaim}{}
\theoremstyle{bozont-sc}
\newtheorem{proclaim-special}[proclaim]{\specialthmname}
\newenvironment{proclaimspecial}[1]
     {\def\specialthmname{#1}\begin{proclaim-special}}
     {\end{proclaim-special}}
\theoremstyle{bozont-subremark}
\basetheorem{subremark}{Remark}{proclaim}
\maketheorem{subrem}{equation}{Remark}
\maketheorem{subnotation}{equation}{Notation} 
\maketheorem{subassume}{equation}{Assumptions} 
\maketheorem{subobs}{equation}{Observation} 
\maketheorem{subexample}{equation}{Example} 
\maketheorem{subex}{equation}{Exercise} 
\maketheorem{inclaim}{equation}{Claim} 
\maketheorem{subquestion}{equation}{Question}
\theoremstyle{bozont-remark}
\basetheorem{remark}{Remark}{proclaim}
\makeremark{subclaim}{subremark}{Claim}
\maketheorem{rem}{proclaim}{Remark}
\maketheorem{claim}{proclaim}{Claim} 
\maketheorem{notation}{proclaim}{Notation} 
\maketheorem{assume}{proclaim}{Assumptions} 
\maketheorem{assumeone}{proclaim}{Assumption} 
\maketheorem{obs}{proclaim}{Observation} 
\maketheorem{example}{proclaim}{Example} 
\maketheorem{examples}{proclaim}{Examples} 
\maketheorem{complem}{equation}{Complement}
\maketheorem{const}{proclaim}{Construction}   
\maketheorem{ex}{proclaim}{Exercise} 
\newtheorem{case}{Case} 
\newtheorem{subcase}{Subcase}   
\newtheorem{step}{Step}
\newtheorem{approach}{Approach}
\maketheorem{Fact}{proclaim}{Fact}
\newtheorem{fact}{Fact}
\newtheorem*{SubHeading*}{\SubHeadingName}%
\newtheorem{SubHeading}[proclaim]{\SubHeadingName}
\newtheorem{sSubHeading}[equation]{\sSubHeadingName}
\newenvironment{demo}[1] {\def\SubHeadingName{#1}\begin{SubHeading}}
  {\end{SubHeading}}%
\newenvironment{subdemo}[1]{\def\sSubHeadingName{#1}\begin{sSubHeading}}
  {\end{sSubHeading}} %
\newenvironment{demor}[1]{\def\SubHeadingName{#1}\begin{SubHeading-r}}
  {\end{SubHeading-r}}%
\newenvironment{subdemor}[1]{\def\sSubHeadingName{#1}\begin{sSubHeading-r}}
  {\end{sSubHeading-r}} %
\newenvironment{demo-r}[1]{%
  \def\SubHeadingName{#1}\begin{SubHeading-r}}
  {\end{SubHeading-r}}%
\newenvironment{subdemo-r}[1]{\def\sSubHeadingName{#1}\begin{sSubHeading-r}}
  {\end{sSubHeading-r}} %
\newenvironment{demo*}[1]{\def\SubHeadingName{#1}\begin{SubHeading*}}
  {\end{SubHeading*}}%
\maketheorem{defini}{proclaim}{Definition}
\maketheorem{defnot}{proclaim}{Definitions and notation}
\maketheorem{question}{proclaim}{Question}
\maketheorem{terminology}{proclaim}{Terminology}
\maketheorem{crit}{proclaim}{Criterion}
\maketheorem{pitfall}{proclaim}{Pitfall}
\maketheorem{addition}{proclaim}{Addition}
\maketheorem{principle}{proclaim}{Principle} 
\maketheorem{condition}{proclaim}{Condition}
\maketheorem{exmp}{proclaim}{Example}
\maketheorem{hint}{proclaim}{Hint}
\maketheorem{exrc}{proclaim}{Exercise}
\maketheorem{prob}{proclaim}{Problem}
\maketheorem{ques}{proclaim}{Question}    
\maketheorem{alg}{proclaim}{Algorithm}
\maketheorem{remk}{proclaim}{Remark}          
\maketheorem{note}{proclaim}{Note}            
\maketheorem{summ}{proclaim}{Summary}         
\maketheorem{notationk}{proclaim}{Notation}   
\maketheorem{warning}{proclaim}{Warning}  
\maketheorem{defn-thm}{proclaim}{Definition--Theorem}  
\maketheorem{convention}{proclaim}{Convention}  
\maketheorem{hw}{proclaim}{Homework}
\maketheorem{hws}{proclaim}{\protect{${\mathbb\star}$}Homework}
\newtheorem*{ack}{Acknowledgment}
\newtheorem*{acks}{Acknowledgments}
\theoremstyle{bozont-number}
\theoremstyle{bozont-def}    
\maketheorem{defn}{proclaim}{Definition}
\maketheorem{subdefn}{equation}{Definition}
\theoremstyle{bozont-reverse}    
\maketheorem{corr}{proclaim}{Corollary} 
\maketheorem{lemr}{proclaim}{Lemma} 
\maketheorem{propr}{proclaim}{Proposition} 
\maketheorem{conjr}{proclaim}{Conjecture}
\theoremstyle{bozont-remark-reverse}
\newtheorem{SubHeading-r}[proclaim]{\SubHeadingName}
\newtheorem{sSubHeading-r}[equation]{\sSubHeadingName}
\newtheorem{SubHeadingr}[proclaim]{\SubHeadingName}
\newtheorem{sSubHeadingr}[equation]{\sSubHeadingName}
\theoremstyle{bozont-reverse-sc}
\newtheorem{proclaimr-special}[proclaim]{\specialthmname}
\newenvironment{proclaimspecialr}[1]%
{\def\specialthmname{#1}\begin{proclaimr-special}}%
{\end{proclaimr-special}}
\theoremstyle{bozont-remark-reverse}
\maketheorem{remr}{proclaim}{Remark}
\maketheorem{subremr}{equation}{Remark}
\maketheorem{notationr}{proclaim}{Notation} 
\maketheorem{assumer}{proclaim}{Assumptions} 
\maketheorem{obsr}{proclaim}{Observation} 
\maketheorem{exampler}{proclaim}{Example} 
\maketheorem{exr}{proclaim}{Exercise} 
\maketheorem{claimr}{proclaim}{Claim} 
\maketheorem{inclaimr}{equation}{Claim} 
\maketheorem{definir}{proclaim}{Definition}
\theoremstyle{bozont-def-newnum-reverse}    
\maketheorem{newnumr}{proclaim}{}
\theoremstyle{bozont-def-newnum-reverse-plain}
\maketheorem{newnumrp}{proclaim}{}
\theoremstyle{bozont-def-reverse}    
\maketheorem{defnr}{proclaim}{Definition}
\maketheorem{questionr}{proclaim}{Question}
\newtheorem{newnumspecial}[proclaim]{\specialnewnumname}
\newenvironment{newnum}[1]{\def\specialnewnumname{#1}\begin{newnumspecial}}{\end{newnumspecial}}
\theoremstyle{bozont-say}    
\newtheorem{say}[proclaim]{}
\theoremstyle{bozont-subsay}    
\newtheorem{subsay}[equation]{}
\theoremstyle{bozont-step}
\newtheorem{bstep}{Step}
\newcounter{thisthm} 
\newcounter{thissection} 
\newcommand{\ilabel}[1]{%
  \newcounter{#1}%
  \setcounter{thissection}{\value{section}}%
  \setcounter{thisthm}{\value{proclaim}}%
  \label{#1}}
\newcommand{\iref}[1]{%
  (\the\value{thissection}.\the\value{thisthm}.\ref{#1})}
\newcounter{lect}
\setcounter{lect}{1}
\newcommand\resetlect{\setcounter{lect}{1}\setcounter{page}{0}}
\newcommand\lecture{\newpage\centerline{\sfbf Lecture \arabic{lect}}
  \addtocounter{lect}{1}}
\newcommand\nnplecture{\hugeskip\centerline{\sfbf Lecture \arabic{lect}}
\addtocounter{lect}{1}}
\newcounter{topic}
\setcounter{topic}{1}
\newenvironment{topic}
{\noindent{\sc Topic 
\arabic{topic}:\ }}{\addtocounter{topic}{1}\par}
\counterwithin{equation}{proclaim}
\counterwithin{enumfulli}{equation}
\counterwithin{figure}{section} 
\newcommand\equinsect{\numberwithin{equation}{section}}
\newcommand\equinthm{\numberwithin{equation}{proclaim}}
\newcommand\figinthm{\numberwithin{figure}{proclaim}}
\newcommand\figinsect{\numberwithin{figure}{section}}
\newenvironment{sequation}{%
\setcounter{equation}{\value{thm}}
\numberwithin{equation}{section}%
\begin{equation}%
}{%
\end{equation}%
\numberwithin{equation}{proclaim}%
\addtocounter{proclaim}{1}%
}
\newcommand{\num}{\arabic{section}.\arabic{proclaim}}
\newenvironment{pf}{\smallskip \noindent {\sc Proof. }}{\qed\smallskip}
\newenvironment{enumerate-p}{
  \begin{enumerate}}
  {\setcounter{equation}{\value{enumi}}\end{enumerate}}
\newenvironment{enumerate-cont}{
  \begin{enumerate}
    {\setcounter{enumi}{\value{equation}}}}
  {\setcounter{equation}{\value{enumi}}
  \end{enumerate}}
\let\lenumi\labelenumi
\newcommand{\rmlabels}{\renewcommand{\labelenumi}{\rm \lenumi}}
\newcommand{\rmlabelsoff}{\renewcommand{\labelenumi}{\lenumi}}
\newenvironment{heading}{\begin{center} \sc}{\end{center}}
\newcommand\subheading[1]{\smallskip\noindent{{\bf #1.}\ }}
\newlength{\swidth}
\setlength{\swidth}{\textwidth}
\addtolength{\swidth}{-,5\parindent}
\newenvironment{narrow}{
  \medskip\noindent\hfill\begin{minipage}{\swidth}}
  {\end{minipage}\medskip}
\newcommand\nospace{\hskip-.45ex}
\newcommand{\sfbf}{\sffamily\bfseries}
\newcommand{\sfbfs}{\sffamily\bfseries\relsize{-.5}
}
\newcommand{\twidle}{\textasciitilde}
\makeatother

\newcounter{stepp}
\setcounter{stepp}{0}
\newcommand{\nextstep}[1]{%
  \addtocounter{stepp}{1}%
  \begin{bstep}%
    {#1}
  \end{bstep}%
  \noindent%
}
\newcommand{\resetsteps}{\setcounter{stepp}{0}}


\title[The relative Du~Bois complex]{The relative Du~Bois complex --- on a question of
  S.~Zucker}

\author{S\'andor J Kov\'acs}

\address{S\'andor Kov\'acs, University of Washington, Department of Mathematics,
  Seattle, WA 98195, USA}
\email{\href{mailto:skovacs@uw.edu }{skovacs@uw.edu }}
\urladdr{\href{http://www.math.washington.edu/~kovacs}{http://www.math.washington.edu/$\sim$kovacs}}

\author{Behrouz Taji} \address{Behrouz Taji, School of Mathematics and Statistics,
The University of New South Wales Sydney, NSW 2052 Australia}
\email{\href{mailto:b.taji@unsw.edu.au}{b.taji@unsw.edu.au}}
\urladdr{\href{https://web.maths.unsw.edu.au/~btaji//}
  {https://web.maths.unsw.edu.au/~btaji/}}

\date{\usdate\today} \thanks{S\'andor Kov\'acs was supported in part by NSF Grants
  DMS-1565352, DMS-1951376, and DMS-2100389.}
\maketitle
\setcounter{tocdepth}{1}
\numberwithin{equation}{section}

\centerline{\emph{To Vyacheslav Shokurov on the occasion of his $70+2^\text{nd}$
    birthday}}
\bigskip
  
\section{Introduction}
\noindent
Rational singularities form an extremley useful class and have provided a powerful
tool in the study of higher dimensional algebraic varieties. A prominent example of
rational singularities is provided by the main class of singularities used in the
minimal model program, that of \emph{klt singularities} \cite{Elkik81}.  The other
pillar of classification theory, besides the minimal model program, is moduli theory. In
fact, the minimal model program is a very useful tool for constructing moduli spaces.
In order to keep track of degenerations, or in other words to work with compact
moduli spaces, one needs to enlarge the class of singularities allowed from klt to
slc singularities.

Unfortunately, slc singularities are not always rational, so one needs to enlarge the
class of rational singularities as well. This enlargement is provided by the class of
Du~Bois singularities. In fact, the relationships between these two pairs of classes
of singularities are very similar. For instance, as mentioned above, klt
singularities are rational, and furthermore, for normal Gorenstein singularities
being rational and klt are equivalent. Similarly, slc singularities are Du~Bois, and
for normal Gorenstein singularities being Du~Bois is equivalent to slc.

Du~Bois singularities were introduced by Steenbrink
\cite{Steenbrink83}. Unfortunately, their definition is rather complicated and so the
reader is referred to \cite[\S 6]{SingBook} for details. For our purposes the
relevant detail is that the definition starts with the construction of the \DB
complex, which is an analogue of the de~Rham complex for not necessarily smooth
varieties.

From a cohomological point-of-view this complex behaves very much like the usual
de~Rham complex, including the existence and degeneration at $E_1$ of the
Hodge-to-de~Rham spectral sequence.  Just like the de~Rham complex, the \DB complex
may be used to acquire a Hodge structure and this actually agrees with Deligne's
Hodge structure on singular complex varieties.

In moduli theory we study families of varieties. An interesting question for smooth
families is how their Hodge structure is changing. In other words, for each smooth
family we obtain a variation of Hodge structure (of geometric origin). An effective
way to produce this is given by higher direct images of the relative de~Rham complex
of the family. As \DB complexes provide a way to obtain Deligne's Hodge structure on
singular varieties, it is a reasonable question to ask whether there is a way to
produce an object that would be analogous to the relative de~Rham complex of a smooth
family and which could be used to produce a variation of Hodge structure for a not
necessarily smooth family. This is probably too much to ask, but one may ask whether
there are some reasonable restrictions on the singularities of the fibers under which
this is possible.

In fact, this is a question that was posed by Steven Zucker at a JAMI conference at
Johns Hopkins University in 1996, organized by Vyacheslav Shokurov. Zucker's question
was motivated by a result of the first named author of the present paper in which an
analogue of the sheaf of relative $p$-forms was constructed. This construction has
been used successfully to prove various results for not necessarily smooth families
\cite{Kovacs96,Kovacs97c,Kovacs02,KT21,KT22}, however it fell short of a positive
answer to Zucker's question.

In this paper we make the first step towards filling that hole. More precisely we
prove the existence of a relative \DB complex for families parametrized by a smooth
curve. This is done in \autoref{sec:relative-db-complex}. We also establish some
expected properties of this complex in \autoref{sec:open-covers} and
\autoref{sec:functoriality}.  This work still leaves some questions open, most
notably a similar construction over arbitrary bases. We discuss this and other open
questions in \autoref{sec:open-questions}.

\section{The relative \DB complex} 
\label{sec:relative-db-complex}

In this section we will initially work in the abelian category of complexes of
abelian sheaves on a complex variety, instead of the corresponding derived category.

\begin{defini}
  Let $M$ be a complex manifold and let $\sA^{p,q}$ denote the sheaf of complex
  valued $C^\infty$-forms of type $(p,q)$ and let
  $\displaystyle\sA_M^{m}=\bigoplus_{p+q=m}\sA^{p,q}$.
  Next, let $X$ be a complex variety of dimension $n$ and $\hypres X$ a
  hyperresolution. Further let
  \[
    \ul\sA_X^m\leteq \bigoplus_{i=0}^n(\varepsilon_i)_*\sA_{X_i}^{m-i}=
    \bigoplus_{i=0}^n\bigoplus_{p=0}^{m-i}(\varepsilon_i)_*\sA_{X_i}^{p,m-i-p},
  \]
  with the filtration
  \[
    \left(F^p\cmx{\ul{\sA}}_X\right)^m\leteq
    \bigoplus_{i=0}^n\bigoplus_{r=p}^{m-i}(\varepsilon_i)_*\sA_{X_i}^{r,m-i-r}.
  \]
  The differential of $\cmx {\ul\sA}_X$ is a combination of the differentials of the
  complexes $\cmx\sA_{X_i}$ and the pull-back morphisms between the pieces of the
  hyperresolution $\varepsilon_\kdot$. For more details see \cite{Steenbrink85}.
  Note that the complex $\cmx {\ul\sA}_X$ is an incarnation of the filtered
  de~Rham-Du~Bois complex, i.e., its image in $D_{\filt}(X)$, the filtered derived
  category of $\sO_X$-modules, is isomorphic to $\cmx\Om_ X$. Further note that
  because $n=\dim X$, $F^p\cmx{\ul\sA}_X=0$ for $p>n$.
\end{defini}

Now, let $f:X\to C$ be a flat morphism from a complex variety $X$ to a smooth complex
curve $C$. Following the construction in \cite{Kovacs96}, we will construct a Koszul
filtration on $\cmx{\ul\sA}_X$. We define the map on simple tensors, and then extend
by linearity to the tensor product pre-sheaf and take the induced map on the
sheafification:
\[
  \xymatrix@R=.1ex{%
    \wedge:
    \cmx{\ul\sA}_X\otimes f^*\omega_C \ar[r] & \cmx{\ul\sA}_X [1] \\
    (\varepsilon_i)_*\eta_i\otimes \xi \ar@{|->}[r] & (\varepsilon_i)_*(\eta_i\wedge
    \varepsilon_i^*\xi) }
\]
This is a filtered map:
\[
  \xymatrix@R=.1ex{%
    \wedge_p\leteq \wedge\resto{F^p\cmx{\ul\sA}_X\otimes f^*\omega_C} :
    F^p\cmx{\ul\sA}_X\otimes f^*\omega_C \ar[r] & F^p(\cmx{\ul\sA}_X [1])=
    (F^{p+1}\cmx{\ul\sA}_X)[1] \\
    (\varepsilon_i)_*\eta_i\otimes \xi \ar@{|->}[r] & (\varepsilon_i)_*(\eta_i\wedge
    \varepsilon_i^*\xi). }
\]

Next, we will define a series of objects recursively. In order to simplify the
notation we will use the following shorthand: $F^p\leteq F^p\cmx{\ul\sA}_X$,
$\wedge'\leteq \wedge\otimes \id_{f^*\omega_C}$, and
$\wedge'_p\leteq \wedge_p\otimes \id_{f^*\omega_C}$. Because $\omega_C$ is a line
bundle, $\wedge\circ\wedge'=0$ and $\wedge_p\circ\wedge'_{p-1}=0$.

We define $E^p$ as follows.

For $p\geq n= \dim X$, let $E^p\leteq 0$,
$\sfw''_p\leteq 0\in \Hom_{C(X)}(F^p\otimes f^*\omega_C, E^p\otimes f^*\omega_C)$,
and $\sfw'_p\leteq 0\in \Hom_{C(X)}(E^p\otimes f^*\omega_C, F^{p+1}[1])$. Note that
then $\sfw'_p\circ \sfw''_p=0$, $\sfw''_p\circ\wedge'_{p-1}=0$, because $\sfw''_p=0$,
and $\wedge_p=0$, because $F^{p+1}=0$.

For $p<n$, assume that the following objects are defined for all $r>p$:
$E^r\in\obj (C(X))$,
$\sfw''_r\in \Hom_{C(X)}(F^r\otimes f^*\omega_C, E^r\otimes f^*\omega_C)$, and
$\sfw'_r\in \Hom_{C(X)}(E^r\otimes f^*\omega_C, F^{r+1}[1])$ such that
$\sfw'_r\circ \sfw''_r=\wedge_r$ and $\sfw''_r\circ\wedge'_{r-1}=0$.  Set
$\sfw_r\leteq \sfw''_r\otimes \id_{f^*\omega_C^{-1}}\in \Hom_{C(X)}(F^r, E^r)$, and
define
\[
  E^p\leteq \cone(\sfw_{p+1})\otimes f^*\omega_C^{-1},
\]
i.e.,
\[
  (E^p)^m= \left( \left(F^{p+1}\right)^{m+1}\oplus \left(E^{p+1}\right)^m\right)
  \otimes f^*\omega_C^{-1},
\]
with differential
\[
  d_{E^p}^m= \left(
    \begin{matrix}
      -d^{m+1}_{F^{p+1}} & 0 \\ & \\
      w^{m+1}_{p+1} & d^m_{E^{p+1}} 
    \end{matrix}
  \right) \otimes \id_{f^*\omega_C^{-1}}.
\]
Notice that according to this definition we have,
\begin{equation}
  \label{eq:4}
  E^{n-1}=F^n[1] \otimes f^*\omega_C^{-1}.
\end{equation}
Next, let
\[
  \xymatrix@C=4em{%
    \sfw''_p: F^p\otimes f^*\omega_C \ar[r] & E^p \otimes f^*\omega_C 
  }
\]
be defined by 
\[
  \xymatrix@C=4em{%
    F^p\otimes f^*\omega_C \ar[r]^-{(\wedge_p,0)} & F^{p+1}[1]\oplus E^{p+1}. }
\]
This is a morphism of complexes, because by assumption $\sfw''_{p+1}\circ\wedge_p=0$. It
follows directly from the definition that
\[
  \sfw''_p\circ\wedge'_{p-1}=\wedge_p\circ\wedge'_{p-1}=0.
\]
Finally,  let
\[
  \xymatrix@C=4em{%
    \sfw'_p: E^p\otimes f^*\omega_C \ar[r] & F^{p+1}[1]
  }
\]
be defined by
\[
  \xymatrix@C=4em{%
    F^{p+1}[1]\oplus E^{p+1} \ar[r]^-{(\id,0)} &  F^{p+1}[1].
  }
\]
Again, directly by the definition, we see that
\[
  \sfw'_p\circ \sfw''_p=\wedge_p.
\]
Iterating this construction we obtain that for each $p\in\bZ$ there exist \sess in
$C(X)$:
\begin{equation}
  \label{eq:1}
  \xymatrix{%
    0 \ar[r] & E^{p+1} \ar[r] & E^p\otimes f^*\omega_C \ar[r] & F^{p+1}[1] \ar[r] & 0. }
\end{equation}
The above constructions works even for negative $p$'s, although we will primarily be
interested in the case of $p\geq -1$. The following statement will be crucial later:
\begin{lem}\label{lem:Ep-is-a-complex}
  For each $p\in\bZ$, $E^{p+1}\subseteq E^p$ is a subcomplex.
\end{lem}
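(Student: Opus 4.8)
The plan is to read the statement off the recursive definition $E^p=\cone(\sfw_{p+1})\otimes f^*\omega_C^{-1}$, the only real input being the elementary fact that the target of a morphism of complexes is a subcomplex of its mapping cone.

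First I would note that the construction has already supplied everything needed: each $\sfw''_{p+1}$, hence $\sfw_{p+1}\colon F^{p+1}\to E^{p+1}$, is a genuine morphism of complexes (as verified during the construction), so $\cone(\sfw_{p+1})$ is a bona fide complex with the displayed differential, and the canonical map $E^{p+1}\hookrightarrow\cone(\sfw_{p+1})$, $\zeta\mapsto(0,\zeta)$, is a chain map: the cone differential is lower triangular, so it carries the summand $0\oplus(E^{p+1})^m$ into $0\oplus(E^{p+1})^{m+1}$ and restricts there to $d_{E^{p+1}}$. (This is the inclusion in the standard short exact sequence $0\to E^{p+1}\to\cone(\sfw_{p+1})\to F^{p+1}[1]\to0$.) Since $f^*\omega_C^{-1}$ is invertible, in particular $\sO_X$-flat, tensoring with it is exact and preserves subcomplex inclusions, so $E^{p+1}\otimes f^*\omega_C^{-1}$ is a subcomplex of $\cone(\sfw_{p+1})\otimes f^*\omega_C^{-1}=E^p$. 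This is the inclusion the lemma asserts — it is precisely the monomorphism of \eqref{eq:1} twisted by $f^*\omega_C^{-1}$ — and it can be read off the matrix for $d_{E^p}^m$ above, whose upper-right block is $0$ and whose lower-right block is $d^m_{E^{p+1}}\otimes\id$. The argument is uniform in $p$; in the degenerate ranges it merely specializes: for $p\ge n$ both complexes vanish, and for $p=n-1$ the map $\sfw_n\colon F^n\to E^n=0$ has cone $F^n[1]$, so $E^n=0$ sits trivially inside $E^{n-1}=F^n[1]\otimes f^*\omega_C^{-1}$, consistently with \eqref{eq:4}.

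I do not expect a genuine obstacle; the lemma is elementary and is stated only because it is used repeatedly afterwards. The only points requiring care are bookkeeping ones: ensuring that $\sfw_{p+1}$ is invoked as a morphism of complexes, not merely of graded sheaves, so that the cone and its subcomplex make sense; and keeping the twists by $f^*\omega_C^{\pm1}$ straight, so that "$E^{p+1}\subseteq E^p$" is understood as the inclusion $E^{p+1}\otimes f^*\omega_C^{-1}\hookrightarrow E^p$ coming from the cone (equivalently, from \eqref{eq:1}). Once that inclusion is identified, its being a subcomplex is immediate from the lower-triangular shape of the cone differential.
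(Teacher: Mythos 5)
There is a genuine gap here: you have identified the wrong inclusion, and as a result you have proven a statement that is true essentially by construction but is not the content of the lemma. The canonical inclusion of $E^{p+1}$ into $\cone(\sfw_{p+1})$, after twisting by $f^*\omega_C^{-1}$, gives $E^{p+1}\otimes f^*\omega_C^{-1}\subseteq E^p$, equivalently $E^{p+1}\subseteq E^p\otimes f^*\omega_C$ --- this is exactly the monomorphism of \autoref{eq:1}. The lemma, however, asserts that $E^{p+1}\subseteq E^p$ \emph{without} the twist; this is what is needed for $\cmx E$ to be an honest filtration of $E^0=\cmx\Om_{X/C}$ by subcomplexes, so that the associated graded quotients $Gr_E^pE^0$ appearing in \autoref{thm:assoc-graded} make sense. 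Since $f^*\omega_C$ is in general nontrivial, these are different maps with different targets; the remark placed immediately after the lemma in the paper addresses precisely this potential confusion and states that the two embeddings are independent of each other.

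The actual proof requires descending induction on $p$. The base case $p\ge n-1$ is trivial because $E^{p+1}=0$. For the inductive step one considers the commutative square whose horizontal arrows are $\sfw_{p+2}\colon F^{p+2}\to E^{p+2}$ and $\sfw_{p+1}\colon F^{p+1}\to E^{p+1}$, whose left vertical arrow is the filtration inclusion $F^{p+2}\subseteq F^{p+1}$, and whose right vertical arrow is the inclusion $E^{p+2}\subseteq E^{p+1}$ supplied by the inductive hypothesis (the square commutes because $\wedge_{p+1}$ restricted to $F^{p+2}$ equals $\wedge_{p+2}$). Functoriality of mapping cones then produces a chain map $\delta_p'\colon E^{p+1}\otimes f^*\omega_C\to E^p\otimes f^*\omega_C$ fitting into a morphism of the short exact sequences \autoref{eq:1}; the five lemma shows $\delta_p'$ is injective, and twisting by $\id_{f^*\omega_C^{-1}}$ yields the desired untwisted inclusion $E^{p+1}\hookrightarrow E^p$. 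In components this map sends $(F^{p+2})^{m+1}\oplus(E^{p+2})^m$ into $(F^{p+1})^{m+1}\oplus(E^{p+1})^m$ via the two vertical inclusions, which is visibly not the cone inclusion you describe. So the lemma is not a formal consequence of the lower-triangular shape of the cone differential; it genuinely uses the compatibility of the maps $\sfw_r$ with the filtration $\cmx F$, together with an induction.
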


\begin{subrem}
  Notice that the construction of $\cmx E$ exhibits
  $E^{p+1}\subseteq E^p\otimes f^*\omega_C$ as a subcomplex (cf.~\autoref{eq:1}),
  so one might wonder if that conflicts with the statement of
  \autoref{lem:Ep-is-a-complex}. There is no conflict, because the two maps giving
  the two embeddings are independent of each other and they map to parts of the
  complexes $E^p$ and $E^p\otimes f^*\omega_C$ that do not correspond to each other
  when twisting with $f^*\omega_C$.
\end{subrem}

\begin{proof}
  We use descending induction on $p$. If $p\geq n-1$, then $E^{p+1}=0$ and hence the
  statement is trivially true.  Let $p$ be fixed and assume that
  $E^{r+1}\subseteq E^r$ is a subcomplex for each $r>p$.  Consider the following
  diagram:
  \[
    \xymatrix@C=4em{%
      \ar@{^(->}[d]       F^{p+2} \ar[r]^{\sfw_{p+2}} & E^{p+2} \ar@{^(->}[d] \\
      F^{p+1} \ar[r]^{\sfw_{p+1}} & E^{p+1}  }
  \]
  The left vertical arrow is an injection by definition, the right vertical arrow is
  an injection by the inductive hypothesis.  The diagram is commutative because the
  non-trivial part of $\sfw_r$ is $\wedge_r$ (for both $r=p+1$ and $r=p+2$) and
  $\wedge_{p+1}\resto {F^{p+2}}=\wedge_{p+2}$ by definition.
  It follows that then there is a commutative diagram of distinguished triangles:
  \[
    \xymatrix@C=4em{%
      \ar@{^(->}[d] F^{p+2} \ar[r]^{\sfw_{p+2}} & E^{p+2} \ar[r] \ar@{^(->}[d] &
      E^{p+1}\otimes f^*\omega_C \ar@{-->}[d]^{\delta_p'} \ar[r]^-{+1} &
      \\
      F^{p+1} \ar[r]^{\sfw_{p+1}} & E^{p+1} \ar[r] & E^{p}\otimes f^*\omega_C
      \ar[r]^-{+1} & }
  \]
  The broken arrow, denoted by $\delta_p'$, exists by the basic properties of mapping
  cones. Therefore turning the distinguished triangles around we obtain a morphism of
  \sess in $C(X)$:
  \begin{equation}
    \label{eq:10}
    \begin{aligned}
      \xymatrix@C=4em{%
        0 \ar[r] & E^{p+2} \ar[r] \ar@{^(->}[d] & E^{p+1}\otimes f^*\omega_C
        \ar[d]^{\delta_p'} \ar[r] & F^{p+2}[1] \ar@{^(->}[d] \ar[r] &
        0  \\
        0 \ar[r] & E^{p+1} \ar[r] & E^p\otimes f^*\omega_C \ar[r] & F^{p+1}[1] \ar[r]
        & 0 }
    \end{aligned}
  \end{equation}
  Then $\delta'_p$ is also injective by the five lemma. Twisting $\delta'_p$ with
  $\id_{f^*\omega_C^{-1}}$ exhibits $E^{p+1}$ as a subcomplex of $E^p$.
\end{proof}

\begin{defini}
  Let $\cmx\Om_{X/C}$ denote the object in $D_\filt(X)$ represented by $E^0$, the
  filtration given by $\cmx E$, i.e., $E^p\cmx\Om_{X/C}\leteq E^p$ for each
  $p\in\bN$, and call it the \emph{relative \DB complex} of the morphism $f:X\to C$.
  For later use, let us also set $E^{-1}\cmx\Om_{X/C}\leteq E^{-1}$.
\end{defini}

Recall that a similar object, $\Om_{X/C}^p$ was defined in \cite[1.3]{Kovacs96}. This
is the singular analogue of $\Omega_{X/C}^p$ for smooth morphisms. In the next
theorem we show the connection between these objects.

\begin{thm}\label{thm:assoc-graded}
  $\cmx\Om_{X/C}$ is a complex filtered by $E^{\kdot\geq 0}=\cmx E\cmx\Om_{X/C}$ such
  that its associated graded quotients satisfy the following in $D(X)$:
  \[
    Gr_E^p\cmx\Om_{X/C}[p] \qis \Om_{X/C}^p.
  \]
  Furthermore, $\cmx\Om_{X/C}$ is a bounded complex, i.e.,
  $\cmx\Om_{X/C}\in \obj D_\filt^b(X)$ and if $f$ is proper, then the cohmology
  sheaves of its associated graded quotients are coherent, i.e., in that case
  $\cmx\Om_{X/C}\in \obj D_{\filt,\coh}^b(X)$
\end{thm}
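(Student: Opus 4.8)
The plan is to pin down the associated graded quotients $Gr_E^p\cmx\Om_{X/C}=E^p/E^{p+1}$ by descending induction on $p$, using as the engine the morphism of short exact sequences \eqref{eq:10} from the proof of \autoref{lem:Ep-is-a-complex}; boundedness and coherence will then follow from the recursive cone description of the $E^p$. All three vertical arrows in \eqref{eq:10} are monomorphisms of complexes of $\sO_X$-modules, and both rows are short exact, so the snake lemma produces a short exact sequence of complexes
\[
  0 \to Gr_E^{p+1}\cmx\Om_{X/C} \to Gr_E^p\cmx\Om_{X/C}\otimes f^*\omega_C \to Gr_F^{p+1}\cmx{\ul\sA}_X[1] \to 0 .
\]
Here the left cokernel is $Gr_E^{p+1}\cmx\Om_{X/C}$ because the left vertical arrow of \eqref{eq:10} is the subcomplex inclusion $E^{p+2}\subseteq E^{p+1}$ of \autoref{lem:Ep-is-a-complex}; the right cokernel is the termwise quotient $Gr_F^{p+1}\cmx{\ul\sA}_X[1]$; and the middle cokernel is $\bigl(E^p/E^{p+1}\bigr)\otimes f^*\omega_C$, since by construction the inclusion $E^{p+1}\subseteq E^p$ equals $\delta'_p\otimes\id_{f^*\omega_C^{-1}}$ and tensoring with the line bundle $f^*\omega_C$ is exact. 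Passing to $D(X)$, and using that $\cmx{\ul\sA}_X$ represents $\cmx\Om_X$ in $D_\filt(X)$, so that $Gr_F^{p+1}\cmx{\ul\sA}_X\qis Gr_F^{p+1}\cmx\Om_X\qis\Omp{X}{p+1}[-p-1]$, this becomes a distinguished triangle
\[
  Gr_E^{p+1}\cmx\Om_{X/C} \to Gr_E^p\cmx\Om_{X/C}\otimes f^*\omega_C \to \Omp{X}{p+1}[-p] \xrightarrow{+1} .
\]

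For $p\geq n$ both sides vanish ($E^p=0$, and $\Om_{X/C}^p=0$ as the relative dimension of $f$ is $n-1$). The base of the induction is $p=n-1$: by \eqref{eq:4} together with $E^n=0$ we have $Gr_E^{n-1}\cmx\Om_{X/C}=E^{n-1}=F^n\cmx{\ul\sA}_X[1]\otimes f^*\omega_C^{-1}$, whose image in $D(X)$ is $\Omp{X}{n}\otimes f^*\omega_C^{-1}[1-n]$, hence $Gr_E^{n-1}\cmx\Om_{X/C}[n-1]\qis\Omp{X}{n}\otimes f^*\omega_C^{-1}$; this is the object $\Om_{X/C}^{n-1}$ of \cite[1.3]{Kovacs96}, the singular analogue of $\omega_{X/C}=\omega_X\otimes f^*\omega_C^{-1}$. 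For $p\leq n-2$, assume $Gr_E^{p+1}\cmx\Om_{X/C}[p+1]\qis\Om_{X/C}^{p+1}$; twisting the triangle above by $f^*\omega_C^{-1}$, shifting by $[p]$, substituting, rotating, and re-twisting by $f^*\omega_C$ gives a distinguished triangle
\[
  Gr_E^p\cmx\Om_{X/C}[p]\otimes f^*\omega_C \to \Omp{X}{p+1} \to \Om_{X/C}^{p+1} \xrightarrow{+1} ,
\]
exhibiting $Gr_E^p\cmx\Om_{X/C}[p]\otimes f^*\omega_C$ as the fiber of a natural wedge-type morphism $\Omp{X}{p+1}\to\Om_{X/C}^{p+1}$. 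This is exactly the distinguished triangle in terms of which $\Om_{X/C}^p$ is defined in \cite[1.3]{Kovacs96} --- the singular analogue of the short exact sequence $0\to\Omega_{X/C}^p\otimes f^*\omega_C\to\Omega_X^{p+1}\to\Omega_{X/C}^{p+1}\to 0$ for smooth $f$ --- so untwisting by $f^*\omega_C$ gives $Gr_E^p\cmx\Om_{X/C}[p]\qis\Om_{X/C}^p$, completing the induction.

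For the remaining assertions, write $\cmx\Om_{X/C}=E^0$ and run the analogous descending induction on the complexes $E^p$ themselves. We have $E^p=0$ for $p\geq n$; and for $p<n$, $E^p\otimes f^*\omega_C=\cone(\sfw_{p+1}\colon F^{p+1}\to E^{p+1})$, where $F^{p+1}=F^{p+1}\cmx{\ul\sA}_X$ represents $F^{p+1}\cmx\Om_X\in\obj D^b(X)$ --- the Hodge filtration on $\cmx\Om_X$ being finite with graded pieces $\Omp{X}{r}[-r]$ --- and $E^{p+1}\in\obj D^b(X)$ by induction; since the cone of a morphism of bounded complexes is bounded and twisting by $f^*\omega_C^{\pm 1}$ preserves this, $E^p\in\obj D^b(X)$. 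Hence $\cmx\Om_{X/C}\in\obj D^b(X)$, and since the filtration $E^{\kdot\geq 0}$ is finite ($E^p=0$ for $p\geq n$), $\cmx\Om_{X/C}\in\obj D_\filt^b(X)$. When $f$ is proper, by \cite[1.3]{Kovacs96} each $\Om_{X/C}^p$ has coherent cohomology sheaves; by the identification of the graded quotients above so does each $Gr_E^p\cmx\Om_{X/C}$, and hence, again by finiteness of the filtration, so does $\cmx\Om_{X/C}$. Thus $\cmx\Om_{X/C}\in\obj D_{\filt,\coh}^b(X)$.

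The one genuinely delicate point is the comparison with \cite[1.3]{Kovacs96} at the level of morphisms rather than objects: one must verify that the morphism $\Omp{X}{p+1}\to\Om_{X/C}^{p+1}$ obtained by rotating the triangle arising from \eqref{eq:10} agrees, up to the relevant twist and shift, with the wedging morphism used there to define $\Om_{X/C}^p$ --- equivalently, that the relation $\sfw'_{p+1}\circ\sfw''_{p+1}=\wedge_{p+1}$ built into the construction of the $E^p$ descends to the expected map on associated graded quotients. Checking this requires tracing the Hodge filtration and the wedging map through the hyperresolution and the successive mapping-cone identifications; once that is in place, the snake-lemma computation and the boundedness induction above are routine.
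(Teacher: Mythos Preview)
Your proof is correct and follows essentially the same route as the paper: the paper assembles the morphism of short exact sequences \eqref{eq:10} into a $3\times 3$ diagram (its \eqref{eq:3}, which is just \eqref{eq:10} shifted by $[p]$ with the cokernel row appended), invokes the nine lemma where you invoke the snake lemma, and then runs the same descending induction against the triangle \cite[(1.3.2)]{Kovacs96}. Your explicit flag of the morphism-level comparison with \cite[1.3]{Kovacs96} is a point the paper handles only by the phrase ``comparing \eqref{eq:3} and \eqref{eq:2}'', so your caution there is well placed rather than a gap.
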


\begin{subrem}
  Note that the original construction/definition of the \DB complex of X uses
  simplicial or cubic hyperresolution and then it must be verified that the
  isomorphism class (in the appropriate derived category) of the obtained complex
  does not depend on the chosen hyperresolution.  In the construction of
  $\cmx\Om_{X/C}$ we did not make direct use of a hyperresolution of $X$, only a
  particular representative of the \DB complex. It is easy to see that the
  construction yields another complex in the same isomorphism class even if one uses
  a different representative. Hence, $\cmx\Om_{X/C}$, the isomorphism class (in the
  appropriate derived category) of the relative \DB complex of $X$ is also
  independent from the choice of hyperresolutions.
\end{subrem}

\begin{proof}
  Recall that by definition $E^0$ is a complex representing the isomorphism class of
  $\cmx\Om_{X/C}$ and its corresponding filtration is given by $E^p$ for
  $p\in\bN$. Similarly for $F^0$ representing $\cmx\Om_{X}$, where $E^p$ and $F^p$
  are as defined above.  Consider the following commutative diagram in $C(X)$:
  \begin{equation}
    \label{eq:3}
    \begin{aligned}
    \xymatrix{%
      & 0 \ar[d] & 0 \ar[d] & 0\ar[d]\\
      0 \ar[r] & E^{p+2}[p] \ar[r] \ar[d] & E^{p+1}[p]\otimes f^*\omega_C \ar[d]
      \ar[r] & F^{p+2}[p+1] \ar[d] \ar[r] &
      0  \\
      0 \ar[r] & E^{p+1}[p] \ar[d] \ar[r] & E^p[p]\otimes f^*\omega_C \ar[d] \ar[r] &
      F^{p+1}[p+1] \ar[d] \ar[r] &
      0 \\
      0 \ar[r] & Gr_E^{p+1}E^0
      [p] \ar[r] \ar[d] & Gr_E^pE^0
      [p]
      \otimes f^*\omega_C \ar[d] \ar[r] & Gr_F^{p+1}F^0
      [p+1] \ar[r] \ar[d] &
      0 \\
      & 0 & 0 & 0 }      
    \end{aligned}
  \end{equation}
  The columns are exact by the definition of the associated graded quotients, the
  first two rows are exact by \autoref{eq:1}, and then the last row is exact by the
  nine lemma.

  Recall that by definition $\Om_X^p=Gr_F^{p}\cmx\Om_{X}[p]$ and that for each $p$
  there exists a distinguished triangle in $D(X)$ (cf.\cite[(1.3.2)]{Kovacs96}),
  \begin{equation}
    \label{eq:2}
    \xymatrix{%
      \Om^p_{X/C}\otimes f^*\omega_C \ar[r] & \Om^{p+1}_X \ar[r] & \Om^{p+1}_{X/C}
      \ar[r]^-{+1} &. 
    }
  \end{equation}
  Because $F^{n+1}=0$, we have that $F^n\simeq Gr_F^{n}\cmx\Om_{X}$ and hence
  $\Om_X^n\qis F^n[n]$. Then by \autoref{eq:4}, \autoref{eq:2} (applied for $p=n-1$),
  and because $E^n=0$ and $\Om_{X/C}^n\qis 0$,we have the following
  quasi-isomorphisms in $D(X)$:
  \begin{equation}
    \label{eq:5}
    Gr_E^{n-1}\cmx\Om_{X/C}[n-1] \leteq E^{n-1}[n-1]\qis \Om_X^n\otimes f^*\omega_C^{-1}
    \qis \Om_{X/C}^{n-1}.
  \end{equation}
  Comparing \autoref{eq:3} and \autoref{eq:2} and using \autoref{eq:5} and descending
  induction on $p$ proves the first statement.

  Boundedness and coherence in the case when $f$ is proper follows from the
  construction and \cite[(1.3.5)]{Kovacs96}.
\end{proof}

\begin{cor}\label{cor:abs-to-rel}
  There exists a morphism $\cmx\Om_X\to \cmx\Om_{X/C}$ in $D_\filt^b(X)$, (and if $f$
  is proper, then in $D_{\filt, \coh}^b(X)$),
  which induces morphisms, for each $p$, that fit into the following distinguished
  triangles:
  \begin{equation}
    \label{eq:6}
    \xymatrix{%
      E^{p-1}\cmx\Om_{X/C}[-1]\otimes f^*\omega_C \ar[r] & F^p\cmx\Om_X \ar[r] &
      E^p\cmx\Om_{X/C}  \ar[r]^-{+1} & , 
    } 
  \end{equation}
  \begin{equation}
    \label{eq:9}
    \xymatrix{%
      E^{-1}\cmx\Om_{X/C}[-1]\otimes f^*\omega_C \ar[r] & \cmx\Om_X \ar[r] &
      \cmx\Om_{X/C}  \ar[r]^-{+1} & , 
    } \protect{\text{and}}
  \end{equation}
  \begin{equation}
    \label{eq:7}
    \xymatrix{%
      \Om^{p-1}_{X/C}\otimes f^*\omega_C \ar[r] & \Om^p_X \ar[r] &
      \Om^p_{X/C}  \ar[r]^-{+1} & ,
    }
  \end{equation}
\end{cor}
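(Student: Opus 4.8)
The plan is to take the morphism $\cmx\Om_X\to\cmx\Om_{X/C}$ to be the one represented by the chain map $\sfw_0\colon F^0\to E^0$ (recall $\sfw_0\leteq\sfw''_0\otimes\id_{f^*\omega_C^{-1}}$), and to read off all three distinguished triangles from the mapping-cone structure already built into $\cmx E$. The first point to settle is that $\sfw_0$ is a \emph{filtered} morphism. The commutative square displayed in the proof of \autoref{lem:Ep-is-a-complex} shows that, under the inclusions $F^{p+2}\hookrightarrow F^{p+1}$ and $E^{p+2}\hookrightarrow E^{p+1}$, the map $\sfw_{p+1}$ restricts on $F^{p+2}$ to $\sfw_{p+2}$; iterating this downward gives that $\sfw_0$ carries $F^p=F^p\cmx\Om_X$ into $E^p=E^p\cmx\Om_{X/C}$ and restricts there to $\sfw_p$. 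Hence $\sfw_0$ defines a morphism in $D_\filt^b(X)$ --- and in $D_{\filt,\coh}^b(X)$ when $f$ is proper, by the boundedness and coherence recorded in \autoref{thm:assoc-graded} --- which induces maps $F^p\cmx\Om_X\to E^p\cmx\Om_{X/C}$ for every $p$.

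Next I would produce \autoref{eq:6}. By the very definition of $E^{p-1}$ one has $\cone(\sfw_p)=E^{p-1}\otimes f^*\omega_C$, so the tautological distinguished triangle of a mapping cone,
\[
  F^p\xrightarrow{\sfw_p} E^p\longrightarrow \cone(\sfw_p)\xrightarrow{+1},
\]
together with \autoref{lem:Ep-is-a-complex} (which guarantees that these objects are honest subcomplexes of $F^0$ and $E^0$), reads
\[
  F^p\cmx\Om_X\longrightarrow E^p\cmx\Om_{X/C}\longrightarrow E^{p-1}\cmx\Om_{X/C}\otimes f^*\omega_C\xrightarrow{+1},
\]
and turning this triangle around yields exactly \autoref{eq:6}, whose middle map is the one induced by $\sfw_0$. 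Setting $p=0$, where $F^0\cmx\Om_X=\cmx\Om_X$ and $E^0\cmx\Om_{X/C}=\cmx\Om_{X/C}$, specializes \autoref{eq:6} to \autoref{eq:9}.

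Finally, for \autoref{eq:7} I would pass to associated graded objects. The bottom row of the commutative diagram \autoref{eq:3} from the proof of \autoref{thm:assoc-graded} is a short exact sequence of complexes which, under the identifications $Gr_E^q\cmx\Om_{X/C}[q]\qis\Om^q_{X/C}$ and $\Om^q_X=Gr_F^q\cmx\Om_X[q]$ of \autoref{thm:assoc-graded} and a degree shift, becomes
\[
  0\longrightarrow \Om^p_{X/C}[-1]\longrightarrow \Om^{p-1}_{X/C}\otimes f^*\omega_C\longrightarrow \Om^p_X\longrightarrow 0;
\]
the associated distinguished triangle, after rotation, is \autoref{eq:7}, and its two non-trivial arrows are precisely the maps induced by $\sfw_0$ on graded pieces.

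The bookkeeping above is essentially formal once the cone structure of $\cmx E$ and \autoref{lem:Ep-is-a-complex} are in hand; the points that genuinely need attention are the correct tracking of the shifts and the $f^*\omega_C$-twists, and the final compatibility assertion for \autoref{eq:7} --- namely that the triangle obtained here coincides, as a triangle, with the distinguished triangle \cite[(1.3.2)]{Kovacs96} defining $\Om^{\bullet}_{X/C}$. For the latter one must check that the connecting map of the bottom row of \autoref{eq:3} is carried by the identifications of \autoref{thm:assoc-graded} to the connecting map of that earlier triangle, which I expect to fall out of the same descending induction on $p$ already used there: concretely, by lifting a cocycle $x\in F^{p+1}[1]$ to $(x,0)\in\cone(\sfw_{p+1})=F^{p+1}[1]\oplus E^{p+1}$ and observing that its differential is $\sfw_{p+1}(x)$ modulo the next step of the filtration.
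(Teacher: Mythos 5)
Your argument is correct and is essentially the paper's own (much terser) proof with the details filled in: the filtered morphism is $\sfw_0$, \autoref{eq:6} is the rotated cone triangle of $\sfw_p$ (using $\cone(\sfw_p)=E^{p-1}\otimes f^*\omega_C$), \autoref{eq:9} is its special case, and \autoref{eq:7} comes from the graded row of \autoref{eq:3} together with the comparison to \cite[(1.3.2)]{Kovacs96} already carried out in \autoref{thm:assoc-graded}. One small point in your favor: with the indexing of \autoref{eq:6} as printed, \autoref{eq:9} is indeed its case $p=0$, as you say, rather than $p=-1$ as stated in the paper's proof.
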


\begin{proof}
  \autoref{eq:6} follows directly from the construction of $E^p$, \autoref{eq:9} is
  the special case of \autoref{eq:6} with $p=-1$, and \autoref{eq:7} is
  \cite[(1.3.2)]{Kovacs96}. The only new information in this statement is that the
  second morphisms in each of these distinguished triangles are induced by a single
  filtered morphism $\cmx\Om_X\to \cmx\Om_{X/C}$. This follows from
  \autoref{thm:assoc-graded}.
\end{proof}

\noin
Next, we compare our construction to the existing relative de~Rham complex in the
case of a smooth family.

\begin{thm}\label{thm:smooth-morphisms}
  Let $f:X\to C$ be a smooth morphism from a (smooth) complex variety $X$ to a smooth
  complex curve $C$.  There exists a natural filtered isomorphism in $D_\filt^b(X)$,
  (and if $f$ is proper, then in $D_{\filt, \coh}^b(X)$),
  \[
    \cmx\Om_{X/C} \simeq \cmx\Omega_{X/C}.
  \]
\end{thm}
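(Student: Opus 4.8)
The plan is to recognize the recursion \autoref{eq:1} as a chain of Koszul short exact sequences and to unwind it in the smooth case. The inputs I would use are: first, that for smooth $X$ the complex $\cmx{\ul\sA}_X=\cmx\sA_X$ represents $\cmx\Om_X$ in $D_\filt^b(X)$ (as noted after the first definition), and that for smooth $X$ this is simply the de~Rham complex $\cmx\Omega_X$ with its stupid (Hodge) filtration, so $F^p\cmx\Om_X$ corresponds to the stupid truncation $\Omega_X^{\geq p}$, and after Dolbeault resolution the map $\wedge_p$ becomes the honest ``wedge with a pulled-back local generator of $\omega_C$''; second, the linear algebra of a smooth morphism, namely the cotangent sequence $0\to f^*\omega_C\to\Omega_X^1\to\Omega_{X/C}^1\to 0$ and the induced Koszul short exact sequences $0\to f^*\omega_C\otimes\Omega_{X/C}^{p}\to\Omega_X^{p+1}\to\Omega_{X/C}^{p+1}\to 0$ (in particular $\Omega_X^n\cong f^*\omega_C\otimes\Omega_{X/C}^{n-1}$, $n=\dim X$), which at the level of complexes amount to the identification $L^1\cap F^p\cmx\Omega_X\cong f^*\omega_C\otimes\bigl(F^{p-1}\cmx\Omega_{X/C}\bigr)[-1]$, where $L^1\subseteq\cmx\Omega_X$ is the subcomplex generated by $f^*\omega_C$, i.e.\ the kernel of the natural surjection $\cmx\Omega_X\onto\cmx\Omega_{X/C}$.

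The heart of the argument is a descending induction on $p$ with the following hypothesis: under the identification $F^p\cmx\Om_X\simeq\Omega_X^{\geq p}$, there is an isomorphism $E^p\simeq\Omega_{X/C}^{\geq p}$ in $D^b(X)$ carrying the structure map $\sfw_p\colon F^p\cmx\Om_X\to E^p$ to the canonical surjection $\Omega_X^{\geq p}\onto\Omega_{X/C}^{\geq p}$, compatibly with the inclusions $E^{p+1}\subseteq E^p$ of \autoref{lem:Ep-is-a-complex} and $\Omega_{X/C}^{\geq p+1}\subseteq\Omega_{X/C}^{\geq p}$. For $p\geq n$ both $E^p$ and $\Omega_{X/C}^{\geq p}$ vanish and $\sfw_p$ is the zero map, so the hypothesis is trivial. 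For the step, $E^p=\cone(\sfw_{p+1})\otimes f^*\omega_C^{-1}$ by construction; by induction $\sfw_{p+1}$ is the surjection $\Omega_X^{\geq p+1}\onto\Omega_{X/C}^{\geq p+1}$, whose mapping cone is quasi-isomorphic to its shifted kernel $\bigl(f^*\omega_C\otimes(F^p\cmx\Omega_{X/C})[-1]\bigr)[1]=f^*\omega_C\otimes\Omega_{X/C}^{\geq p}$, so twisting back by $f^*\omega_C^{-1}$ gives $E^p\simeq\Omega_{X/C}^{\geq p}$ (the base case $p=n-1$ is exactly \autoref{eq:4}). That $\sfw_p$ is again the canonical surjection under this new identification reduces, at the level of the Dolbeault model, to the pointwise computation that wedging with a pulled-back local generator of $\omega_C$ followed by untwisting by $f^*\omega_C^{-1}$ induces the projection $\Omega_X^m\onto\Omega_{X/C}^m$ in each degree $m$; the same computation matches the two filtration inclusions, and the sign it introduces is absorbed into the choice of isomorphism. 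Running $p$ through all integers $\geq 0$ assembles these into a single filtered isomorphism $\cmx\Om_{X/C}=E^0\simeq\cmx\Omega_{X/C}$ in $D_\filt^b(X)$ (in particular this recovers, for smooth $f$, the identification $\Om_{X/C}^p\simeq\Omega_{X/C}^p$ of \cite[1.3]{Kovacs96} together with \autoref{thm:assoc-graded}). Naturality is automatic since every step is natural in $f$, and when $f$ is proper the isomorphism lies in $D_{\filt,\coh}^b(X)$ because $\cmx\Om_{X/C}$ does by \autoref{thm:assoc-graded} while $\cmx\Omega_{X/C}$ is a bounded complex of coherent sheaves.

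The main obstacle is bookkeeping rather than a single deep step: one must thread the filtration, the twists by $f^*\omega_C$, and the degree shifts correctly through the iterated mapping cones, and — the delicate point — verify that the isomorphisms produced step by step are compatible with the filtration inclusions, so that they glue to a genuine \emph{filtered} isomorphism and not merely an isomorphism on associated gradeds (the latter being automatic from \autoref{thm:assoc-graded} but strictly weaker, since a filtered quasi-isomorphism is not determined by its effect on $Gr$). An essentially equivalent route is to start instead from the filtered morphism $\cmx\Om_X\to\cmx\Om_{X/C}$ of \autoref{cor:abs-to-rel}, identify it with the quotient $\cmx\Omega_X\onto\cmx\Omega_{X/C}$, and then compare the distinguished triangles \autoref{eq:6} with the filtered short exact sequences $0\to f^*\omega_C\otimes(F^{p-1}\cmx\Omega_{X/C})[-1]\to F^p\cmx\Omega_X\to F^p\cmx\Omega_{X/C}\to 0$ via the five lemma; in that approach the crux is the identification of that morphism with the de~Rham projection, which is again settled by the same pointwise wedge computation together with \autoref{eq:7}.
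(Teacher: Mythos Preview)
Your proposal is correct and follows essentially the same strategy as the paper: a descending induction on $p$ identifying $E^p$ with the stupid truncation $\Omega_{X/C}^{\geq p}$. The paper's proof is terser because it first quotes \cite[(1.3.4)]{Kovacs96} for the identification $\Om_{X/C}^p\simeq\Omega_{X/C}^p$ of the graded pieces and then invokes the diagram \autoref{eq:3} (from the proof of \autoref{thm:assoc-graded}) to propagate this to the filtered level; you instead unwind the cone definition $E^p=\cone(\sfw_{p+1})\otimes f^*\omega_C^{-1}$ directly and identify the cone with the shifted Koszul kernel by hand. Your route is more self-contained and makes the filtered compatibility (the matching of $\sfw_p$ with the canonical surjection and of the inclusions $E^{p+1}\subseteq E^p$ with the stupid ones) explicit, which is exactly the point the paper absorbs into the phrase ``using the diagram \autoref{eq:3} and descending induction''. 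The pointwise computation you flag---wedging with a local generator of $f^*\omega_C$ followed by untwisting induces the projection $\Omega_X^m\onto\Omega_{X/C}^m$---is correct and is indeed the one place where something needs to be checked beyond formal cone manipulations.
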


\begin{proof}
  If $f$ is smooth, then $\omega_{X/C}$ is a locally free sheaf and
  $\Om_{X/C}^p \simeq\Omega_{X/C}^p$ by \cite[(1.3.4)]{Kovacs96}. Using the diagram
  \autoref{eq:3} and descending induction on $p$ shows that the filtration $\cmx E$
  constructed earlier is simply the \emph{filtration b\^ ete} of the relative de~Rham
  complex $\cmx\Omega_{X/C}$, which in particular shows that desired statement.
\end{proof}

\begin{rem}
  Note that if $f$ is smooth, then by \autoref{thm:smooth-morphisms} the filtration
  $\cmx E$ becomes stationary downwards from $p=0$, i.e.,
  $E^0=E^{-1}=E^{-2}=\dots$. This implies that in this case 
  $E^{-1}\cmx\Om_{X/C}[-1]\simeq \cmx\Om_{X/C}[-1]$ and hence \autoref{eq:9}
  recovers the well-known \ses for smooth morphisms, cf.\cite[(X-11),
  p.250]{MR2393625}:
  \begin{equation*}
    \xymatrix{%
      0 \ar[r] &  \cmx\Omega_{X/C}[-1]\otimes f^*\omega_C \ar[r] & \cmx\Omega_X \ar[r] &
      \cmx\Omega_{X/C}  \ar[r] & 0.
    }
  \end{equation*}
\end{rem}

\section{Open covers}
\label{sec:open-covers}

\noin %
The construction of the relative \DB complex is invariant under restricting to an
open set:

\begin{prop}\label{prop:restrict-to-an-open}
  Let $f:X\to C$ be a flat morphism from a complex variety $X$ to a smooth complex
  curve $C$ and $U\subseteq X$ an open set. Then there exists a natural filtered
  isomorphism in $D_\filt^b(U)$, (and if $f\resto U:U\to C$ is proper, then in
  $D_{\filt, \coh}^b(U)$),
  \[
    \cmx\Om_{X/C}\resto U \simeq \cmx\Om_{U/C}.
  \]
\end{prop}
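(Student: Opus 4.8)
The plan is to trace through the recursive construction of $\cmx E$ and observe that each ingredient behaves well under restriction to an open set. The starting point is that restriction of sheaves to an open subset $U\subseteq X$ is an exact functor, so it commutes with taking cones, direct sums, and twisting by line bundles. Concretely, since the Du~Bois complex and its filtration are local, $F^p\cmx{\ul\sA}_X\resto U$ is a representative of $F^p\cmx\Om_U$ (this is part of the standard theory, as the hyperresolution $\varepsilon_\kdot$ can be restricted), and $f^*\omega_C\resto U = (f\resto U)^*\omega_C$. The wedge maps $\wedge_p$ were defined pointwise on simple tensors via pullback along the pieces of the hyperresolution, so they restrict to the corresponding maps for $f\resto U:U\to C$.

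First I would set up the induction: for $p\geq n=\dim X$ (note $\dim U\leq n$, so the vanishing $F^p\cmx{\ul\sA}_U=0$ for $p>\dim U$ still holds, and in any case $E^p=0$ for $p\geq n$ on both sides) the statement $E^p_X\resto U=E^p_U$ is trivial. For the inductive step, suppose $E^{p+1}_X\resto U \simeq E^{p+1}_U$ compatibly with the maps $\sfw''_{p+1}$, $\sfw'_{p+1}$. Since $E^p$ is defined as $\cone(\sfw_{p+1})\otimes f^*\omega_C^{-1}$ and the mapping cone is computed levelwise as a direct sum with an explicit differential built from $d_{F^{p+1}}$, $d_{E^{p+1}}$ and $w_{p+1}$, exactness of restriction gives
\[
  E^p_X\resto U = \cone\bigl(\sfw_{p+1}\resto U\bigr)\otimes (f\resto U)^*\omega_C^{-1} = E^p_U,
\]
and the maps $\sfw''_p=(\wedge_p,0)$ and $\sfw'_p=(\id,0)$ restrict to the corresponding maps on $U$ because $\wedge_p$ does. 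This produces the identification of the short exact sequences \autoref{eq:1} on $U$ with the restriction of those on $X$, hence the filtered isomorphism $\cmx\Om_{X/C}\resto U \simeq \cmx\Om_{U/C}$ as objects of $D_\filt(U)$, together with the matching of filtration pieces $E^p$. Boundedness is preserved automatically; in the case $f\resto U$ proper, coherence follows from \autoref{thm:assoc-graded} applied to $U$, or equivalently from the coherence on $X$ and flatness of restriction, giving membership in $D_{\filt,\coh}^b(U)$.

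The step I expect to require the most care is not a computation but a bookkeeping point: making precise that the chosen representative $\cmx{\ul\sA}_X$ of the filtered Du~Bois complex restricts to a valid representative of $\cmx{\ul\sA}_U$, and that the wedge map $\wedge$ and all the auxiliary morphisms $\sfw''_p,\sfw'_p$ restrict compatibly. This is essentially the statement that the whole construction is functorial for open immersions, which is intuitively clear since everything was built out of sheaves and sheaf maps defined by local (pointwise-on-tensors) formulas, but it should be stated carefully so that the induction closes. Once that is granted, the five-lemma argument from \autoref{lem:Ep-is-a-complex} that $E^{p+1}\subseteq E^p$ restricts verbatim, and the isomorphism is visibly filtered and natural in $U$.
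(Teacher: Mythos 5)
Your proposal is correct and follows essentially the same route as the paper, whose entire proof is the one-line observation that the claim ``follows directly from the construction and \cite[3.10]{DuBois81}'' --- the cited result of Du~Bois being exactly the bookkeeping point you flag, namely that the chosen filtered representative of $\cmx\Om_X$ restricts to one for $\cmx\Om_U$. Your unwinding of the recursion (exactness of restriction through cones, twists, and the maps $\sfw''_p$, $\sfw'_p$) is just the detailed expansion of that remark.
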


\begin{proof}
  This follows directly from the construction and \cite[3.10]{DuBois81}.
\end{proof}

\begin{cor}
  The construction of the relative \DB complex is invariant under an open base
  change. In other words, using the notation from \autoref{prop:restrict-to-an-open},
  further let $\jmath: V\into C$ be an open embedding and let
  $\imath: X_V=V\times_CX\into X$ the corresponding open embedding into $X$ and
  $f_V:X_V\to V$ the base change of $f$. Then there exists a natural filtered
  isomorphism in $D_\filt^b(X_V)$, (and if $f$ is proper, then in
  $D_{\filt, \coh}^b(X_V)$),
  \[
    \imath^*\cmx\Om_{X/C} \simeq \cmx\Om_{X_V/V}.
  \]
\end{cor}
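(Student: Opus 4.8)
The plan is to deduce this from \autoref{prop:restrict-to-an-open}. Since $\jmath$ is an open embedding, $\imath\colon X_V=V\times_CX\into X$ is an open embedding with image the open set $U\leteq f^{-1}(V)\subseteq X$, and under the induced identification $X_V\cong U$ the base change $f_V$ becomes the restriction $f\resto U$ regarded as a morphism to the open subcurve $V\subseteq C$; in particular $\imath^*\cmx\Om_{X/C}$ is canonically identified with $\cmx\Om_{X/C}\resto U$. Moreover, because $\jmath$ is an open immersion of smooth curves, $\omega_V\cong\jmath^*\omega_C$, and hence there is a canonical isomorphism $f_V^*\omega_V\cong\imath^*f^*\omega_C=(f\resto U)^*\omega_C$.

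Granting this, I would first invoke \autoref{prop:restrict-to-an-open} to get a natural filtered isomorphism $\cmx\Om_{X/C}\resto U\simeq\cmx\Om_{U/C}$, where $\cmx\Om_{U/C}$ is the relative \DB complex of the composite $f\resto U\colon U\into X\xrightarrow{f}C$, built from the hyperresolution of $X$ restricted to $U$ (this restriction is again a hyperresolution, by \cite[3.10]{DuBois81}, as in the proof of that proposition). It then remains to identify $\cmx\Om_{U/C}$ with $\cmx\Om_{X_V/V}$. This is transparent from the recursive construction of \autoref{sec:relative-db-complex}: the objects $E^p$ together with the filtered maps $\sfw'_p$ and $\sfw''_p$ are built from a single representative of the \emph{filtered} absolute \DB complex of the source together with the line bundle $g^*\omega_T$, where $g\colon U\to T$ is the structure morphism to the base curve, and the pairing $\wedge$ it determines, the recursion itself using only mapping cones, shifts, and twists by $g^*\omega_T^{\pm1}$. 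Taking $T=C$, $g=f\resto U$ and $T=V$, $g=f_V$ now feeds the same source $U$ (with the same restricted hyperresolution) and, via $(f\resto U)^*\omega_C\cong f_V^*\omega_V$, the same line bundle and pairing into this recursion, so the two constructions coincide, filtrations included. Composing with the isomorphism of \autoref{prop:restrict-to-an-open} yields the desired natural filtered isomorphism $\imath^*\cmx\Om_{X/C}\simeq\cmx\Om_{X_V/V}$ in $D_\filt^b(X_V)$.

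For the coherence statement, note that if $f$ is proper then so is its base change $f_V\colon X_V\to V$, and hence $\cmx\Om_{X_V/V}$ lies in $D_{\filt,\coh}^b(X_V)$ by \autoref{thm:assoc-graded}; since the isomorphism above is a filtered isomorphism of objects of $D_\filt^b(X_V)$, it is automatically one in $D_{\filt,\coh}^b(X_V)$. (It is worth noting that one cannot simply quote the coherence part of \autoref{prop:restrict-to-an-open} here, as $f\resto U\colon U\to C$ need not be proper even when $f$ is; what is relevant is the properness of $f_V$ over $V$.) I do not expect a substantial obstacle in this argument; the only point that deserves a word is the verification that the recursion defining $\cmx E$ is compatible with pulling back along the open embedding $\imath$, and this is immediate from the exactness of $\imath^*$ on $C(X)$ and the identity $\imath^*f^*\omega_C\cong f_V^*\omega_V$ --- precisely the observation already underlying \autoref{prop:restrict-to-an-open} --- with naturality under iterated open base changes following formally.
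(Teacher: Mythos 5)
Your argument is correct and follows essentially the same route as the paper: first apply \autoref{prop:restrict-to-an-open} to identify $\imath^*\cmx\Om_{X/C}$ with the relative \DB complex of $X_V$ over $C$, then observe that since $f_V$ factors through $V$ the natural isomorphism $f_V^*\omega_C\simeq f_V^*\omega_V$ makes the recursive construction over $C$ and over $V$ literally coincide. Your extra remark about why the coherence claim should be referred to the properness of $f_V$ rather than to the coherence part of \autoref{prop:restrict-to-an-open} is a sensible refinement of a point the paper leaves implicit.
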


\begin{proof}
  It is straightforward from \autoref{prop:restrict-to-an-open} that
  $\imath^*\cmx\Om_{X/C} \simeq \cmx\Om_{X_V/C}$. Next, note that as $f_V:X_V\to C$
  factors through $V$, in the construction of $\cmx\Om_{X_V/C}$, whenever
  $f_V^*\omega_C$ appears, it may be replaced by $f_V^*\omega_V$, because the two are
  actually naturally isomorphic. Hence $\cmx\Om_{X_V/C}\simeq \cmx\Om_{X_V/V}$, which
  proves the statement.
\end{proof}

Finally, we note that the relative \DB complex can be computed on an open cover in
the following sense: 

\begin{thm}
  Let $f:X\to C$ be a flat morphism from a complex variety $X$ to a smooth complex
  curve $C$ and let $\{\nu_i:U_i\into X\}$ be a finite open cover of $X$. Then there
  exists a natural filtered isomorphism in $D_\filt^b(X)$, (and if $f\resto{U_i}$ is
  proper for each $i$, then in $D_{\filt, \coh}^b(X)$),
  \[
    \cmx\Om_{X/C}\simeq \myR{\nu_\kdot}_*\cmx \Om_{U_\kdot/C}.
  \]
\end{thm}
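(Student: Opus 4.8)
The plan is to derive the theorem from \autoref{prop:restrict-to-an-open} together with classical cohomological descent for finite open covers. Write $U_\kdot$ for the \v Cech nerve of the cover, with $U_p=\coprod_{i_0,\dots,i_p}U_{i_0}\cap\dots\cap U_{i_p}$ and augmentation $\nu_\kdot\colon U_\kdot\to X$, and recall that $\myR{\nu_\kdot}_*$, applied to a filtered complex on $U_\kdot$, means the filtered total complex of $p\mapsto\myR(\nu_p)_*$ of its $p$-th term; since the cover is finite this is a finite total complex, so it preserves $D_\filt^b$. The coherence assertion, when $f\resto{U_i}$ is proper for each $i$, needs no descent at all: by \autoref{prop:restrict-to-an-open} and \autoref{thm:assoc-graded} the cohomology sheaves of $Gr_E^p\cmx\Om_{X/C}$ are coherent on each $U_i$, hence coherent on $X$, so $\cmx\Om_{X/C}\in\obj D_{\filt,\coh}^b(X)$ already, and the isomorphism we are about to produce will land in $D_{\filt,\coh}^b(X)$.

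The first step is to observe that the construction of the relative \DB complex is strictly functorial for open immersions: the isomorphism of \autoref{prop:restrict-to-an-open} is induced by the tautological identification of the representing complexes $\cmx E$ upon restriction, so it is compatible with iterated restrictions and therefore assembles into an isomorphism of augmented simplicial objects in $D_\filt^b$,
\[
  \bigl(p\mapsto\cmx\Om_{U_p/C}\bigr)\ \simeq\ \bigl(p\mapsto\nu_p^*\cmx\Om_{X/C}\bigr),
\]
compatible with the filtrations on both sides. Applying $\myR{\nu_\kdot}_*$ then reduces the theorem to showing that the natural augmentation $\cmx\Om_{X/C}\to\myR{\nu_\kdot}_*\nu_\kdot^*\cmx\Om_{X/C}$ is a filtered isomorphism in $D_\filt^b(X)$.

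For this, since the filtration $\cmx E$ has finite length ($E^p=0$ for $p\geq\dim X$), a filtered morphism is an isomorphism in $D_\filt^b(X)$ as soon as it becomes a quasi-isomorphism after applying each $E^p=F^p\cmx\Om_{X/C}$; and because \autoref{prop:restrict-to-an-open} is \emph{filtered}, the augmentation at level $p$ is precisely $E^p\to\myR{\nu_\kdot}_*\nu_\kdot^*E^p$ for the honest complex $E^p$ (using that restriction to an open set is exact, so commutes with $\cmx E$). Thus the statement reduces to a purely sheaf-theoretic fact, applied to the finitely many bounded complexes $E^p$: for any bounded complex $\cmx K$ of sheaves on $X$ and any finite open cover, the augmentation $\cmx K\to\myR{\nu_\kdot}_*\nu_\kdot^*\cmx K$ is a quasi-isomorphism — the sheafified augmented \v Cech complex of $\cmx K$ is exact, and one may replace each $(\nu_p)_*$ by $\myR(\nu_p)_*$ since the restriction of an injective sheaf to an open subset is again injective (see \cite{DuBois81} for the analogous input used for the absolute \DB complex). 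Alternatively one could pass to associated gradeds via \autoref{thm:assoc-graded} and invoke open-cover descent for the $\Om^p_{X/C}$, reducing through the triangle \autoref{eq:7} to the absolute sheaves $\Om^p_X$; either way the descent ingredient is classical. The point that will demand genuine care — and the expected main obstacle in writing the argument out — is the filtered bookkeeping: checking that $\myR{\nu_\kdot}_*$ of the honest subcomplex filtration $E^{p+1}\subseteq E^p$ (\autoref{lem:Ep-is-a-complex}) is still a well-defined object of the filtered derived category and that the augmentation respects it; the descent itself contributes nothing beyond what is already standard.
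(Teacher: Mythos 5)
Your argument is correct, and in fact it contains the paper's own proof as the ``alternative'' you mention at the end: the paper disposes of this theorem in one line, by appealing to the construction of $\cmx E$ together with \cite[(1.3.6)]{Kovacs96}, which is exactly the open-cover descent statement for the graded pieces $\Om^p_{X/C}$; combined with the recursive cone construction (equivalently, the triangles \autoref{eq:1}/\autoref{eq:7} and descending induction on $p$ as in the proof of \autoref{thm:assoc-graded}), this yields the filtered isomorphism. Your primary route is genuinely different and more self-contained: rather than reducing to the known descent for the associated gradeds, you use the strict compatibility of the representing complexes $E^p$ with open restriction (which is indeed how \autoref{prop:restrict-to-an-open} works, via \cite[3.10]{DuBois81}) to identify $\cmx\Om_{U_p/C}$ with $\nu_p^*\cmx\Om_{X/C}$ on the nerve, and then invoke classical \v{C}ech/cohomological descent for a bounded complex on a finite open cover, applied level by level to the finite filtration. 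What your route buys is independence from \cite[(1.3.6)]{Kovacs96} and an argument at the level of honest complexes rather than in the derived category; what the paper's route buys is brevity, since the graded descent is already on record and the filtered bookkeeping is absorbed into the inductive comparison of the triangles. Your closing caveat about checking that $\myR{\nu_\kdot}_*$ of the subcomplex filtration remains a well-defined filtered object is the right thing to worry about, and is handled by the same finiteness of the cover and of the filtration that you already use; also note that your observation that coherence needs no descent (it follows from \autoref{prop:restrict-to-an-open} and \autoref{thm:assoc-graded} alone) is a small simplification not made explicit in the paper.
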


\begin{rem}
  The reader not familiar with the notation used on the right hand side of this
  isomorphism, should consult \cite[p.46]{DuBois81} or \cite[1.1]{Kovacs96}.
\end{rem}

\begin{proof}
  This follows from the construction and \cite[(1.3.6)]{Kovacs96}.
\end{proof}

\section{Functoriality}
\label{sec:functoriality}

\noin%
In this section we explore the functorial properties of the construction of the
relative \DB complex.

\begin{notation}\label{not:funct}
  We will consider a commutative diagram,
  \[
    \xymatrix{%
      Y \ar[rr]^\phi \ar[rd]_-g &&  X \ar[ld]^f \\
      & C, }
  \]
  where $f$ and $g$ are flat morphisms, $X$ and $Y$ are complex varieties and $C$ is
  a smooth complex curve.
\end{notation}

\begin{thm}
  Under \autoref{not:funct} there exists a commutative diagram of natural filtered
  morphisms in $D_\filt^b(X)$, (and if $f$ is proper, then in
  $D_{\filt, \coh}^b(X)$),
  \[
    \xymatrix{%
      \cmx\Om_{X} \ar[r] \ar[d]  &  \myR\phi_*\cmx\Om_{Y} \ar[d]  \\
      \cmx\Om_{X/C}\ar[r] &  \myR\phi_*\cmx\Om_{Y/C},  }
  \]
  where the vertical morphisms are the ones obtained in \autoref{cor:abs-to-rel}.
\end{thm}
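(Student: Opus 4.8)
The plan is to imitate, for the comparison of the data attached to $f$ and to $g$, the recursive cone construction of \autoref{sec:relative-db-complex}. The top horizontal arrow $\cmx\Om_X\to\myR\phi_*\cmx\Om_Y$ is the classical functoriality morphism of the (absolute) \DB complex \cite{DuBois81}: after choosing a hyperresolution of the morphism $\phi$, i.e.\ compatible hyperresolutions $\varepsilon^X_\kdot\colon X_\kdot\to X$, $\varepsilon^Y_\kdot\colon Y_\kdot\to Y$ together with $\phi_\kdot\colon Y_\kdot\to X_\kdot$ over $\phi$, it is represented by a morphism of filtered complexes of sheaves on $X$,
\[
  \Phi\colon \cmx{\ul\sA}_X \longrightarrow \phi_*\cmx{\ul\sA}_Y,
\]
obtained by pulling $C^\infty$-forms back along the $\phi_i$ and summing over simplicial degree. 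Because the complexes occurring in the construction consist of soft sheaves, $\phi_*$ of such a complex computes $\myR\phi_*$ of it; so $\phi_*\cmx{\ul\sA}_Y$ represents $\myR\phi_*\cmx\Om_Y$, $\phi_*E^0_Y$ represents $\myR\phi_*\cmx\Om_{Y/C}$, and it suffices to produce a morphism $E^0_X\to\phi_*E^0_Y$ in $C(X)$ compatible with the filtrations $E^\kdot_X$ and $\phi_*E^\kdot_Y$ for which the required square commutes. Here $E^\kdot_X$, $\sfw_\kdot$, etc.\ denote the data built from $f$, while $E^\kdot_Y$, $\sfw^{(Y)}_\kdot$, etc.\ denote the corresponding data built from $g$.

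The single compatibility that will be needed is this: since $g=f\circ\phi$ we have $g^*\omega_C=\phi^*f^*\omega_C$, so the projection formula (valid as $\omega_C$ is a line bundle) gives a canonical identification $\phi_*\bigl(\cmx{\ul\sA}_Y\otimes g^*\omega_C\bigr)\cong\bigl(\phi_*\cmx{\ul\sA}_Y\bigr)\otimes f^*\omega_C$; and because both wedge maps are given on simple tensors by $(\varepsilon_i)_*\eta_i\otimes\xi\mapsto(\varepsilon_i)_*(\eta_i\wedge\varepsilon_i^*\xi)$ while $\phi_\kdot$ is compatible with the $\varepsilon_\kdot$, the morphism $\Phi$ intertwines $\wedge_X$ with $\wedge_Y$ under this identification, hence also $\wedge_{X,p}$ with $\wedge_{Y,p}$ for every $p$.

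I would then run the recursion of \autoref{sec:relative-db-complex} for $f$ and for $g$ in parallel and construct, by descending induction on $p$, morphisms $\Psi_p\colon E^p_X\to\phi_*E^p_Y$ in $C(X)$ together with the commutativity of
\[
  \xymatrix@C=3.6em{
    F^p\cmx{\ul\sA}_X \ar[r]^-{\sfw_p}\ar[d]_-{\Phi} & E^p_X \ar[d]^-{\Psi_p} \\
    \phi_*\bigl(F^p\cmx{\ul\sA}_Y\bigr) \ar[r]^-{\phi_*\sfw^{(Y)}_p} & \phi_*E^p_Y.
  }
\]
For $p\geq\max(\dim X,\dim Y)$ all four terms vanish. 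For the step, the square at level $p+1$ is exactly a morphism of arrows from $\sfw_{p+1}$ to $\phi_*\sfw^{(Y)}_{p+1}$, so it induces a morphism of mapping cones $\cone(\sfw_{p+1})\to\cone(\phi_*\sfw^{(Y)}_{p+1})=\phi_*\cone(\sfw^{(Y)}_{p+1})$ (using that $\phi_*$ commutes with finite direct sums); twisting by $f^*\omega_C^{-1}$ and invoking the projection-formula identification together with $E^p_X=\cone(\sfw_{p+1})\otimes f^*\omega_C^{-1}$ and $\phi_*E^p_Y=\phi_*\cone(\sfw^{(Y)}_{p+1})\otimes f^*\omega_C^{-1}$ produces $\Psi_p$. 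Since $\sfw_p$ and $\sfw^{(Y)}_p$ are by definition $(\wedge_{X,p},0)$ resp.\ $(\wedge_{Y,p},0)$ into the first summand of the two-term decomposition $F^{p+1}[1]\oplus E^{p+1}$ of the cone, and the induced cone morphism is block-diagonal with blocks $\Phi[1]$ and $\Psi_{p+1}$, the square at level $p$ reduces to the identity $\Phi\circ\wedge_{X,p}=\wedge_{Y,p}\circ\Phi$ from the previous paragraph. Setting $p=0$ yields a filtered morphism $\cmx\Om_{X/C}\to\myR\phi_*\cmx\Om_{Y/C}$ in $D_\filt^b(X)$, whose independence of the chosen hyperresolution follows from that of $\Phi$ exactly as in the Remark after \autoref{thm:assoc-graded}; boundedness and the coherence statement when $f$ is proper follow from \autoref{thm:assoc-graded} and the standard behaviour of $\myR\phi_*$.

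Finally, the square in the statement is precisely the level-$0$ instance of the square above: by \autoref{cor:abs-to-rel} (see \autoref{eq:9}) the left vertical morphism is represented by $\sfw_0\colon F^0\cmx{\ul\sA}_X=\cmx{\ul\sA}_X\to E^0_X$, and the right vertical morphism is $\myR\phi_*$ of the morphism $\cmx\Om_Y\to\cmx\Om_{Y/C}$ from \autoref{cor:abs-to-rel} applied to $g$, i.e.\ $\phi_*\sfw^{(Y)}_0$; the top morphism is $\Phi$, the bottom one is $\Psi_0$, and commutativity is the level-$0$ square. The step I expect to require the most care is the first one — making the functoriality morphism $\Phi$ explicit as an honest morphism of the $\sA$-complexes (not merely of their images in the derived category) that is strictly compatible with the $\varepsilon_\kdot$-pull-backs, so that it intertwines the wedge maps on the nose; this is, however, part of the classical theory of the \DB complex via hyperresolutions of a morphism, and granting it, everything else is a routine iteration of the functoriality of mapping cones, parallel to the proofs of \autoref{lem:Ep-is-a-complex} and \autoref{thm:assoc-graded}.
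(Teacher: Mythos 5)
Your proposal is correct and follows essentially the same route as the paper: anchor the top row in the classical functoriality of the absolute \DB complex (a filtered morphism $\cmx\Om_X\to\myR\phi_*\cmx\Om_Y$ realized by an honest morphism of chosen representatives), then descend inductively through the recursive cone construction to produce compatible morphisms $E^p_X\to\myR\phi_*E^p_Y$, the $p=0$ case giving the desired square. The only difference is one of emphasis: you make explicit (via soft $\sA$-complexes, the projection formula, and strict intertwining of the wedge maps) the choice of representatives that the paper simply posits when it fixes complexes representing $\myR\phi_*\cmx F_Y$ and $\myR\phi_*E_Y^p$ and extracts the middle map $\beta_p$ from its diagram of short exact sequences.
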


\begin{proof}
  Let $\cmx F_X$ and $\cmx F_Y$ denote the filtrations of $\cmx\Om_{X}$ and
  $\cmx\Om_{Y}$ and $\cmx E_X$ and $\cmx E_Y$ the filtrations of $\cmx\Om_{X/C}$ and
  $\cmx\Om_{Y/C}$ constructed above.  By \cite[(3.2.1)]{DuBois81} there exists a
  morphism $\gamma: \cmx F_X\to \myR\phi_*\cmx F_Y$ in $D_\filt^b(X)$. We choose and
  fix a complex in $C(X)$ representing the class of $\myR\phi_*\cmx F_Y$ such that
  this morphism is represented by a morphism of complexes. By abuse of notation we
  will use the same symbols for these complexes. Next we choose complexes
  representing $\myR\phi_*E_Y^p$ by descending induction for each $p\in\bN$, starting
  with the analogue of \autoref{eq:4} and following the same steps as on
  Page~\pageref{eq:4} until we get to the analogue of \autoref{eq:10}:
  \[
    \xymatrix@C=4em{%
      0 \ar[r] & \myR\phi_*E_Y^{p+1} \ar[r] & \myR\phi_*E_Y^p\otimes f^*\omega_C
      \ar[r] & \myR\phi_*F_Y^{p+1}[1] \ar[r] & 0 }
  \]
  We want to compare this \ses to the one on $X$, i.e., consider the following
  commutative diagram of \sess in $C(X)$ (we are using the above chosen complexes to
  represent the derived image objects). The morphisms will be explained below.
  \begin{equation}
    \label{eq:11}
    \begin{aligned}
      \xymatrix@C=4em{%
        0 \ar[r] & E_X^{p+1} \ar[r] \ar[d]^{\alpha_{p+1}} & E_X^{p}\otimes
        f^*\omega_C \ar@{-->}[d]^{\beta_p} \ar[r] & F_X^{p+1}[1] \ar[d]^{\gamma_p}
        \ar[r] &
        0  \\
        0 \ar[r] & \myR\phi_*E_Y^{p+1} \ar[r] & \myR\phi_*E_Y^p\otimes f^*\omega_C
        \ar[r] & \myR\phi_*F_Y^{p+1}[1] \ar[r] & 0 }
    \end{aligned}
  \end{equation}
  The morphism $\gamma_p$ (in $C(X)$!) is induced by the morphism $\gamma$ above.
  Using descending induction on $p$ we assume that a morphism $\alpha_{p+1}$ as
  indicated exists in $C(X)$.  Then the diagram \autoref{eq:11} shows that there
  exists a $\beta_p$ that makes it commutative and hence we may define
  $\alpha_p\leteq \beta_p\otimes\id_{f^*\omega_C^{-1}}$ for the next inductive step.

  Comparing this with the construction of these complexes shows that the induced
  morphisms on the associated graded quotients recover the similar compatible
  morphisms from \cite[(1.3.3)]{Kovacs96}.
\end{proof}


\section{Open questions}\label{sec:open-questions}

As it is obvious from the rest of this article, this construction is presently only
carried out when the base of a family is a smooth curve. The next step would be to
extend this construction to families over arbitrary bases.

\begin{demo}{\small\sf Problem}\label{prob-1}
  Is there a similar construction for families over arbitrary bases? More precisely,
  let $f:X\to S$ be a flat morphism from a complex variety $X$ to a smooth complex
  variety $S$. Is there an object $\cmx\Om_{X/S}$ with properties similar to the ones
  proved here in the case $\dim S=1$?
\end{demo}

\begin{subrem}
  It seems likely that the methods of \cite{Kovacs05a} and \cite{Kovacs97c} would
  allow this construction, but there are a few obstacles to be removed.
\end{subrem}

Perhaps the most important questions with respect to applications are with regard to
base change properties of this construction. We have established base change for open
embeddings.
However, the interesting question is arbitrary base change, especially restriction to
a point.

\begin{demo}{\small\sf Problem}\label{prob-2}
  Let $f:X\to C$ be a flat morphism from a complex variety $X$ to a smooth complex
  curve $C$ and let $t\in C$. Under what conditions does the following hold?
  \[
    \myL\jmath^*\cmx\Om_{X/C}\simeq \cmx\Om_{X_t},
  \]
  where $\jmath:\{t\}\into C$ is the embedding of $t$ in $C$.
\end{demo}

\begin{subrem}
  Of course, if $f$ is smooth, then $\cmx\Om_{X/C}\simeq\cmx\Omega_{X/C}$ consists of
  locally free sheaves and hence
  $\myL\jmath^*\cmx\Om_{X/C}\simeq \jmath^*\cmx\Omega_{X/C}\simeq \cmx\Omega_{X_t}$.
\end{subrem}

In case Problem~\ref{prob-1} has a positive solution, then one can ask the more general
question:

\begin{demo}{\small\sf Problem}
  let $f:X\to S$ be a flat morphism from a complex variety $X$ to a smooth complex
  variety $S$ and let $\tau: T\to S$ be a morphism from a smooth complex variety
  $T$. Under what conditions does the following hold?
  \[
    \myL\jmath^*\cmx\Om_{X/S}\simeq \cmx\Om_{X_T},
  \]
  where $X_T=X\times_ST$.
\end{demo}


\def\cprime{$'$} \def\cprime{$'$} \def\cprime{$'$} \def\cprime{$'$}
  \def\cprime{$'$} \def\polhk#1{\setbox0=\hbox{#1}{\ooalign{\hidewidth
  \lower1.5ex\hbox{`}\hidewidth\crcr\unhbox0}}} \def\cdprime{$''$}
  \def\cprime{$'$} \def\cprime{$'$} \def\cprime{$'$} \def\cprime{$'$}
  \def\cprime{$'$}
\providecommand{\bysame}{\leavevmode\hbox to3em{\hrulefill}\thinspace}
\providecommand{\MR}{\relax\ifhmode\unskip\space\fi MR}
\providecommand{\MRhref}[2]{%
  \href{http://www.ams.org/mathscinet-getitem?mr=#1}{#2}
}
\providecommand{\href}[2]{#2}

\end{document}
